\documentclass[reqno]{amsart}
\usepackage{mathpazo}
\usepackage[colorlinks=true, pdfstartview=FitV, linkcolor=blue,
            citecolor=blue, urlcolor=blue]{hyperref} 
\usepackage[usenames, dvipsnames]{color}
\usepackage{booktabs}
\usepackage{textcomp}
\numberwithin{equation}{section}
\usepackage[sort]{cite}

\usepackage{fancyhdr}

\fancypagestyle{firststyle}
{
   \fancyhf{}
   
   \cfoot{}
   \rfoot{\footnotesize \vspace{1em} \today}
   \lfoot{\footnotesize \vspace{1em} Preprint accepted for publication by EJDE} 
}

\newtheorem{thm}{Theorem}[section]
\newtheorem{lem}{Lemma}[section]
\newtheorem{defi}{Definition}[section]

\newtheorem{rem}{Remark}[section]
\newtheorem{cor}{Corollary}[section]
\def\bt{\begin{thm}}
\def\et{\end{thm}}
\def\bl{\begin{lem}}
\def\el{\end{lem}}
\def\bd{\begin{defi}}
\def\ed{\end{defi}}
\def\bc{\begin{cor}}
\def\ec{\end{cor}}
\def\bp{\begin{proof}}
\def\ep{\end{proof}}
\def\br{\begin{rem}}
\def\er{\end{rem}}
\def\bi{\begin{itemize}}
\def\ei{\end{itemize}}


\def\Lip{{\rm Lip}}

\def\cH{\mathcal H}
\def\Forall{\text{ } \forall \:}
\def\d{\mathrm{d}}
\def\v{v}
\def\R{\mathbb{R}}

\def\be{\begin{equation}}
\def\ee{\end{equation}}
\def\bes{\begin{equation*}}
\def\ees{\end{equation*}}
\def\bea{\begin{equation} \begin{aligned}}
\def\eea{\end{aligned} \end{equation}}
\def\beas{\begin{equation*} \begin{aligned}}
\def\eeas{\end{aligned} \end{equation*}}

\begin{document}
\title[Galerkin approximations of nonlinear optimal control problems]
{Galerkin approximations of nonlinear optimal control problems in Hilbert spaces}

\author[Micka\"el D. Chekroun]{Micka\"el D. Chekroun}
\address{Department of Atmospheric \& Oceanic Sciences, University of California, Los Angeles, CA 90095-1565, USA}
\email{mchekroun@atmos.ucla.edu}

\author[Axel Kr\"oner]{Axel Kr\"oner}
\address{INRIA and CMAP, \'Ecole Polytechnique, CNRS, Universit\'e Paris Saclay, 91128 Palaiseau, France}
\email{axel.kroener@inria.fr}

\author[Honghu Liu]{Honghu Liu}
\address{Department of Mathematics, Virginia Polytechnic Institute and State University, Blacksburg, Virginia 24061, USA}
\email{hhliu@vt.edu}

\begin{abstract}
Nonlinear optimal control problems in Hilbert spaces are considered for which we derive 
approximation theorems for Galerkin approximations. Approximation theorems are available in the literature. The originality of our approach
relies on the identification of a set of natural assumptions that allows us to deal with a broad class of nonlinear evolution equations and cost functionals for which we derive convergence of the value functions associated with the optimal control problem of the Galerkin approximations. This convergence result holds for a broad class of nonlinear control strategies as well. In particular, we show that the framework applies to the optimal control of semilinear heat equations posed on a general compact manifold without boundary.   The framework is then shown to apply to geoengineering and mitigation of greenhouse gas emissions formulated here in terms of optimal control of energy balance climate models posed on the sphere $\mathbb{S}^2$.

\bigskip
\noindent{\bf \keywordsname}. Nonlinear optimal control problems;  Galerkin approximations; Greenhouse gas emissions; Energy balance models; Trotter-Kato approximations

\medskip
\noindent{\em 2010 Mathematics Subject Classification}. 35Q86, 35Q93, 35K58, 49J15, 49J20, 86-08
\end{abstract}


\maketitle
\thispagestyle{firststyle}

\tableofcontents

\section{Introduction}

Optimal control problems of infinite dimensional systems play an important role in a broad range of applications in engineering and various scientific disciplines \cite{Lions71,Fattorini99,Fur00,bensoussan2007representation,HPUU09,Tro10}. Various methods for solving numerically the related optimization problems are available; see e.g.~\cite{medjo2008optimal,HPUU09}. 
The case of linear evolution equations has benefited from a long tradition, and an abundant literature exists about finite element techniques or Galerkin methods for the design of approximate optimal controls; see e.g.~\cite{Mcknight_al73,Gibson79,lasiecka1980unified,malanowski1982convergence,knowles1982finite,Banks_al84,lasiecka1987regulator,Alt_al89,Lasiecka_al00}.   The case of Galerkin approximations of optimal control problems for nonlinear evolutions seems to have been much less addressed. Semidiscrete Ritz-Galerkin approximations of nonlinear parabolic boundary control problems have been considered for which convergence of the approximate controls have been obtained; see \cite{MR1297998,MR1264014}. We refer also to \cite{meidner2007adaptive,Neitzel2012} for error estimates concerned with space-time finite element approximations of the state and control to optimal control problems governed by semilinear parabolic equations, and to \cite{deckelnick2004semidiscretization} for finite element approximations of optimal control problems associated with the Navier-Stokes equations.

In this article, we study Galerkin approximations for (possibly non-quadratic) optimal control problems over a finite horizon $[0,T]$, of nonlinear evolution equations in Hilbert space. Our framework covers not only a broad class of semilinear parabolic equations but also includes systems of nonlinear delay differential equations (DDEs)  \cite{CGLW15} and allows in each case for a broad class of nonlinear control strategies. The main contribution of this article is to identify for such equations a set of easily checkable conditions in practice, from which we prove the pointwise convergence of the value functions associated with the optimal control problem of the Galerkin approximations, and for a broad class of cost functionals; see Theorem~\ref{Thm_cve_Galerkin_val}, our main result. This convergence at the level of value functions results essentially from a double uniform convergence---with respect to time and the set of admissible controllers---of the controlled Galerkin states; see Theorem \ref{Lem:uniform_in_u_conv} and Corollary \ref{Lem:uniform_conv_locally_Lip_t_v2}.

The treatment adopted here is based on the classical Trotter-Kato approximation approach from the $C_0$-semigroup theory \cite{Pazy83,Goldstein85}, which can be viewed as the functional analysis operator version of the Lax equivalence principle.\footnote{i.e., if ``consistency'' and ``stability'' are satisfied, then ``convergence'' holds, and reciprocally.} Within this approach, we generalize, in particular, the convergence results about value functions obtained in the earlier work \cite{Ferretti97} concerned with the Galerkin approximations to optimal control problems governed by linear evolution equations in Hilbert space.  Given a Hilbert state space $\cH$, denoting by $\Pi_N$ the orthogonal projector associated with the $N$-dimensional Galerkin subspace, and by $y_N(\cdot; \Pi_N x, u)$  the controlled  Galerkin state (driven by $u$) and emanating from $\Pi_N x$, a key property to ensure convergence of the value functions for such optimal control problems is the following double uniform convergence
\be \label{uniform_in_u_conv_Intro}
\lim_{N\rightarrow \infty}  \sup_{u\in \mathcal{U}_{ad}} \sup_{t \in [0, T]} \|y_N(t; \Pi_N x, u) - y(t; x,u)\|_{\cH} = 0, \quad \forall \; x \in \cH,
\ee
where $\mathcal{U}_{ad}$ denotes a set of admissible controls; see \cite[Theorem 4.2]{Ferretti97}.

When the evolution equation involves  state- or control-dependent nonlinear terms, the conditions provided in \cite[Proposition 2.1]{Ferretti97} to ensure \eqref{uniform_in_u_conv_Intro} needs to be amended. Whether the nonlinear terms involve the controls or the system's state,  our working assumption regarding the linear terms of the original equation and of its Galerkin approximations, is (as in \cite{Ferretti97}) to satisfy respectively the  ``stability'' and ``consistency'' conditions required in the Trotter-Kato theorem; see Assumption {\bf (A1)} and Assumption {\bf (A2)} in Sect.~\ref{Sec_Galerkin_prelim}.  In the case of a linear equation with nonlinear control terms, a simple compactness assumption about the set of admissible controls (see Assumption {\bf (A5)} in Sect.~\ref{Sect_uniform_convergence}) is sufficient to 
ensure \eqref{uniform_in_u_conv_Intro}; see Remark \ref{Rmk_F=0}.

In the case of an evolution equation depending nonlinearly on the system's state and on the controls, a key assumption is introduced to ensure \eqref{uniform_in_u_conv_Intro} that adds up to standard local Lipschitz conditions on the state-dependent nonlinear terms (Assumption~{\bf (A3)}) and the nonlinear control operator $\mathfrak{C}:V\rightarrow \cH$, where $V$ denotes an auxiliary Hilbert space in which the controls take values. Introducing $\Pi^\perp_N:=\mbox{Id}_{\cH} - \Pi_N$, this assumption concerns a double 
uniform convergence about the residual energy $\|\Pi^\perp_N y(t;x,w)\|_{\cH}$ (see Assumption {\bf (A7)} in Sect.~\ref{Sect_uniform_convergence}), namely
\be \label{Est_uniformity_highmodes_intro}
\lim_{N \rightarrow \infty}  \sup_{u\in \mathcal{U}_{ad}} \sup_{t\in[0,T]} \|\Pi^\perp_N y(t;x,u)\|_{\cH} =0.
\ee

 With this assumption at hand, and the rest of our working assumptions, standard {\it a priori} bounds --- uniform in $u$ in $\mathcal{U}_{ad}$ (Assumption~{\bf (A6)}) --- allow us to ensure
\eqref{uniform_in_u_conv_Intro} for a broad class of nonlinear evolution equations in Hilbert spaces.  The pointwise convergence of the value functions associated with the optimal control problem of the corresponding Galerkin approximations is then easily derived for a broad class of cost functionals; see Theorem~\ref{Thm_cve_Galerkin_val}.

The relevance of assumption \eqref{Est_uniformity_highmodes_intro} for applications is addressed through various angles. First, from the proof of Corollary \ref{Lem:uniform_conv_locally_Lip_t_v2} (and thus Theorem \ref{Lem:uniform_in_u_conv}) in which Theorem~\ref{Thm_cve_Galerkin_val} relies.  In that respect, a sort of pedagogical detour is made in Sect.~\ref{Sect_local_convergence} in which we show essentially that a weaker (than \eqref{uniform_in_u_conv_Intro}) local-in-$u$ approximation  result (Lemma \ref{Lem_local_in_u_est})  follows from the rest of our working assumptions (except Assumptions {\bf (A6)} and {\bf (A7)}\footnote{More precisely, by assuming a weaker version of Assumption {\bf (A6)}, namely Assumption~{\bf (A4)}, and without assuming {\bf (A7)}.}) and from a local-in-$u$ estimate about the residual energy (Lemma~\ref{Lem:local_in_u_conv}); the latter resulting itself from the continuity of the mapping $u\mapsto y(t;x,u)$. Condition \eqref{Est_uniformity_highmodes_intro} constitutes thus a natural strengthening of inherent properties to the approximation problem. 

From a more applied perspective, sufficient conditions concerning the spectrum of the linear part---such as self-adjointness and compact resolvent---are pointed out in Sect.~\ref{Sect_examples} to ensure \eqref{Est_uniformity_highmodes_intro}; see\footnote{See also \cite[Sect.~2.3]{CKL17_DDE} for other spectral conditions which do not rely on self-adjointness while ensuring  \eqref{Est_uniformity_highmodes_intro}.} Lemma \ref{Lem_examples} and  Remark \ref{Rmk_other_spectral_assumption}.  
Finally,  Sect.~\ref{Sec_Err_estimates} provides error estimates concerning the value function and the optimal control that complete the picture and emphasize from another perspective the relevance of the residual energy in the analysis of the approximation problem; see Theorem \ref{Thm_PM_val} and Corollary \ref{Lem_controller_est}.

With this preamble in mind, we provide now the more formal organization of this article. 
First, we present in Sect.~\ref{Sec_Galerkin_prelim} the type of state equation and its corresponding Galerkin approximations that we will be working on. A trajectory-wise convergence result for each fixed control $u$ is then derived in Sect.~\ref{Sect_trajectory_convergence}. As mentioned earlier, it relies  essentially on the theory of $C_0$-semigroups and the Trotter-Kato theorem \cite[Thm.~4.5, p.88]{Pazy83}; see Lemma~\ref{Lem:local_in_u_conv}. In a second step, we derive a ``local-in-$u$'' approximation result in Sect.~\ref{Sect_local_convergence} for controls that lie within a neighborhood of a given control $u$; see Lemma \ref{Lem_local_in_u_est}.  As discussed above, a key approximation property about the residual energy of solutions (see \eqref{Est_local_uniformity_highmodes}) is then amended into  an assumption (see Assumption {\bf (A7)}) 
to ensure a uniform-in-$u$ convergence result; see Theorem \ref{Lem:uniform_in_u_conv} of Sect.~\ref{Sect_uniform_convergence}. As shown in Sect.~\ref{Sec_cve}, this uniform convergence result helps us derive---in the spirit of dynamic programming (see Corollary \ref{Lem:uniform_conv_locally_Lip_t_v2})---the convergence of the value functions associated with optimal control problems based on Galerkin approximations; see Theorem~\ref{Thm_cve_Galerkin_val}. For this purpose, some standard sufficient conditions for the existence of optimal controls are also recalled in Appendix~\ref{Sec_existence_opt_contr}. Simple and useful error estimates about the value function and the optimal control are then provided in Sect.~\ref{Sec_Err_estimates}. In Sect.~\ref{Sect_examples} we point out a broad class of evolution equations for which Assumption {\bf (A7)} is satisfied. 

As applications of the theoretical results derived in Sect.~\ref{Sect_Galerkin}, we show in Sect.~\ref{Sec_appl} that our framework allows to provide rigorous Galerkin approximations to the optimal control of  a broad class of semilinear heat problems, posed on a compact (smooth) manifold without boundary.  As a concrete example, the framework is shown to apply to geoengineering and the mitigation of greenhouse gas (GHG) emissions formulated for the first time here in terms of optimal control of energy balance models (EBMs) arising in climate modeling; see \cite{Budyko69,Sellers69,Ghil76,North_al81} for an introduction on EBMs, and \cite{Diaz98,bermejo2009mathematical} for a mathematical analysis.   After recalling some fundamentals of differential geometry in Sect.~\ref{Sect_diff_geo_prelim} to prepare the analysis, a general convergence result of Galerkin approximations to controlled semilinear heat problems posed on an $n$-dimensional sphere $\mathbb{S}^n$ is formulated in Sect.~\ref{Sec_optctr_man}; see Corollary~\ref{Cor:heat_Galerkin_approx}. The application to the optimal control of EBMs is then presented in Sect.~\ref{Sec_EBM} in the context of geoengineering and GHG emissions for which approximation of the value function and error estimates about the optimal control are obtained. Finally, Sect.~\ref{Sect_concluding_remarks} outlines several possible directions for future research the framework introduced in this article opens up.

\section{Galerkin approximations for optimal control problems: Convergence results}  \label{Sect_Galerkin}
We present in this section, rigorous convergence results for semi-discretization of optimal control problems based on Galerkin approximations. 
In particular, we derive the pointwise convergence of the value functions associated with optimal control problems based on Galerkin approximations  in Sect.~\ref{Sec_cve}.

\subsection{Preliminaries} \label{Sec_Galerkin_prelim}
We consider in this article finite-dimensional approximations of the following initial-value problem (IVP):
\bea \label{ODE}
\frac{\d y}{\d t} &= L y + F(y) + \mathfrak{C} (u(t)),  \quad t \in (0, T],\\
y(0) &= x,
\eea
where $x$ lies in $\cH$, and $\cH$ denotes a separable Hilbert space. The time-dependent forcing $u$ lives in a separable Hilbert space $V$ (possibly different from $\cH$); the (possibly nonlinear) mapping $\mathfrak{C}: V \rightarrow \cH$ is assumed to be such that $\mathfrak{C}(0)=0$.  Other assumptions regarding $\mathfrak{C}$ will be made precise when needed.

We assume that the linear operator $L: D(L) \subset \cH \rightarrow \cH$ is the  infinitesimal  generator of a
$C_0$-semigroup of bounded linear operators $T(t)$ on $\cH$. Recall that in this case the domain $D(L)$ of $L$ is dense in $\cH$ and that $L$ is a closed operator;  see \cite[Cor.~2.5, p.~5]{Pazy83}. 

Under the above assumptions on the operator $L$, recall that there exists $M\geq 1$ and $\omega \geq 0$  \cite[Thm.~2.2, p.~4]{Pazy83} such that
\be\label{Eq_control_T_t}
\|T(t)\| \le M e^{\omega t},  \qquad t \ge 0,
\ee
where $\|\cdot \|$ denotes the operator norm subordinated to $\|\cdot\|_{\cH}$.

For the moment, we take the set of admissible controls to be
\be \label{Eq_admissible_set_LpV}
 \mathcal{U} := L^q(0, T; V),
\ee
with $q\geq1$. In the later subsections, further assumptions on the admissible controls will be specified when needed.

Let $u$ be in $\mathcal{U}$  given by \eqref{Eq_admissible_set_LpV}, a {\it mild solution} to \eqref{ODE} over $[0,T]$ is a function $y$ in $C([0,T],\cH)$ such that
\be\label{Eq_mild}
y(t)=T(t)x + \int_0^t T(t-s) F(y(s)) \d s + \int_0^t T(t-s) \mathfrak{C}(u(s)) \d s, \;\; t\in [0,T].
\ee
In what follows we will often denote by $t\mapsto y(t;x,u)$ a mild solution to \eqref{ODE}.

Let $\{\cH_N: N \in \mathbb{Z}_+^\ast\}$ be a sequence of finite-dimensional subspaces of $\cH$ associated with {\it orthogonal  projectors}
\be
\Pi_N: \cH \rightarrow \cH_N,
\ee
such that 
\be\label{Eq_identity_approx}
\|(\Pi_N-\mbox{Id}) x\| \underset{N \rightarrow \infty}\longrightarrow 0, \qquad \Forall x \in \cH,
\ee
and
\be\label{Eq_XN_in_domain}
\cH_N\subset D(L), \; \forall \, N\geq 1.
\ee

The corresponding Galerkin approximation of \eqref{ODE} associated with $\cH_N$ is then given by:
\bea \label{ODE_Galerkin}
\frac{\d y_N}{\d t} &= L_N y_N + \Pi_N F(y_N) + \Pi_N \mathfrak{C} (u(t)), \; t\in [0,T],\\
y_N(0) &= \Pi_N x, \; \; x\in \cH,
\eea
where  
\be\label{Def_LN}
L_N := \Pi_N L \Pi_N : \cH \rightarrow \cH_N. 
\ee
In particular, the domain 
$D(L_N)$ of $L_N$ is $\cH$, because of \eqref{Eq_XN_in_domain}.

Throughout this section,  we assume the following set of assumptions:
\bi

\item[{\bf (A0)}]  {\it The linear operator $L: D(L) \subset \cH \rightarrow \cH$ is the  infinitesimal  generator of a
$C_0$-semigroup of bounded linear operators $T(t)$ on $\cH$.}

\item[{\bf (A1)}]  {\it For each positive integer $N$, the linear flow $e^{L_N t}:\cH_N \rightarrow \cH_N$ extends to a $C_0$-semigroup $T_N(t)$ on $\cH$. Furthermore the following uniform bound is satisfied by the family $\{T_N(t)\}_{N \geq 1, t\geq0}$
\be \label{Eq_control_linearflow}
\quad \|T_N(t)\| \le M e^{\omega t}, \quad N \geq 1, \; \quad t \ge 0,
\ee
where $\|T_N(t)\|:=\sup\{\|T_N(t)x\|_{\cH}, \; \|x\|_{\cH} \leq 1, x\in \cH\}$ and the constants $M$ and $\omega$ are the same as given in \eqref{Eq_control_T_t}.}

\item[{\bf (A2)}] 
{\it The following convergence holds}
\be \label{Eq_L_Approx}
\lim_{N \rightarrow \infty} \|L_N  \phi - L \phi \|_{\cH} = 0, \quad \Forall \phi \in D(L).
\ee

\item[{\bf (A3)}] {\it The nonlinearity $F$ is locally Lipschitz in the sense given in \eqref{Local_Lip_cond} below.} 

\ei

Following the presentation commonly  adopted for the Trotter-Kato approach, the assumptions {\bf (A0)}-{\bf (A2)} are concerned with the linear parts of the original system \eqref{ODE} and of its Galerkin approximation \eqref{ODE_Galerkin}.
Assumption {\bf (A3)} is concerned with the nonlinearity in \eqref{ODE}. Other assumptions regarding the latter will be made in the sequel.  Throughout this article, a mapping $f: \mathcal{W}_1 \rightarrow \mathcal{W}_2$ between two Banach spaces, $\mathcal{W}_1$ and $\mathcal{W}_2$, is said to be locally Lipschitz if for any ball $\mathfrak{B}_r \subset \mathcal{W}_1$ with radius $r>0$ centered at the origin,  there exists a constant $\Lip(f\vert{_{\mathfrak{B}_r}})>0$ such that 
\be  \label{Local_Lip_cond}
\|f(y_1) - f(y_2)\|_{\mathcal{W}_2} \le \Lip(f\vert{_{\mathfrak{B}_r}})\|y_1 - y_2\|_{\mathcal{W}_1}, \qquad \Forall y_1, y_2 \in \mathfrak{B}_r.
\ee

\subsection{Convergence of Galerkin approximations: Trajectory-wise result} \label{Sect_trajectory_convergence}   
As a preparation for the main result given in Sect.~\ref{Sec_cve}, we derive hereafter a trajectory-wise convergence result for the solutions to the Galerkin approximations \eqref{ODE_Galerkin}; see Lemma \ref{Lem:local_in_u_conv} below.

With this purpose in mind, besides {\bf (A0)}--{\bf (A3)}, we will also make use of the following assumption. 
\bi
\item[{\bf (A4)}] {\it For each $T>0$, $x$ in $\cH$ and each $u$ in $\mathcal{U}$ with $\mathcal{U}$ defined in \eqref{Eq_admissible_set_LpV}, the problem \eqref{ODE} admits a unique mild solution $y(\cdot; x, u)$ in $C([0,T],\cH)$, and its Galerkin approximation \eqref{ODE_Galerkin} admits a unique solution $y_N(\cdot; \Pi_N x, u)$ in  $C([0,T],\cH_N)$ for each $N$ in $\mathbb{Z}_+^\ast$. Moreover, there exists a constant $C:=C(T,x,u)$ such that}
\bea \label{Eq_y_local-in-u_bounds}
& \|y_N(t; \Pi_N x, u)\|_{\cH} \le C, \qquad \Forall t\in[0,T],\; N \in \mathbb{Z}_+^\ast.
\eea
\ei

Note that in applications, {\bf (A4)} is typically satisfied via {\it a priori} estimates; see Remark~\ref{Rmk:unif_bdd_on_soln}-(ii) below. 

\bl \label{Lem:local_in_u_conv}
Let $\mathcal{U}$ be the set of admissible controls given by \eqref{Eq_admissible_set_LpV}, with $q$ defined therein. 
Consider the IVP \eqref{ODE} and the associated Galerkin approximation \eqref{ODE_Galerkin}.  Assume that the nonlinear operator $\mathfrak{C}:V\rightarrow \cH$ satisfies, for  $1\leq p \leq q$, the following growth condition, 
\be \label{Eq_poly_growth_C}
\|\mathfrak{C} (w)\|_{\cH} \le \gamma_1 \|w\|^p_V + \gamma_2, \qquad \Forall w \in V, 
\ee
where $\gamma_1 > 0$ and $\gamma_2 \ge 0$.  

Assume also that {\bf (A0)}--{\bf (A4)} hold. Then for each fixed $T > 0$, $x$  in ${\cH}$ and $u$ in $\mathcal{U}$, the following convergence result is satisfied:
\be \label{local_in_u_conv_Goal}
\lim_{N\rightarrow \infty}  \sup_{t \in [0, T]} \|y_N(t; \Pi_N x, u) - y(t; x,u)\|_{\cH} = 0.
\ee

\el

\bp
To simplify the notations, only the $x$-dependency that matters to the estimates derived hereafter will be made explicit.  
Let $u$ be given in $\mathcal{U}$ and $x$ in $\cH$, then by the variation-of-constants formula applied to Eq.~\eqref{ODE_Galerkin} we have, for $0\leq t\leq T$ and $N$ in $\mathbb{Z}_+^\ast$, that
\bea \label{Eq_Galerkin_VCF}
y_N(t; u) = e^{L_N t} \Pi_N x & + \int_0^t e^{L_N (t -s )} \Pi_N F(y_N(s; u)) \d s \\
& \hspace{4em} + \int_0^t e^{L_N (t -s )} \Pi_N \mathfrak{C} (u(s)) \d s.
\eea

Then, it follows from \eqref{Eq_mild} and \eqref{Eq_Galerkin_VCF} that the difference 
\bes
w_N(t; u):=y(t; u)  - y_N(t; u), 
\ees
satisfies 
\bea
w_N(t; u)&=  T(t) x - e^{L_N t} \Pi_N x  \\
&\hspace{2em}+ \int_0^t T(t-s)F(y(s; u)) \d s - 
\int_0^t e^{L_N (t -s )} \Pi_N F(y_N(s; u)) \d s  \\
&  \hspace{4em}  +  \int_0^t T(t-s)\mathfrak{C} (u(s)) \d s - \int_0^t e^{L_N (t -s )} \Pi_N \mathfrak{C} (u(s)) \d s \\
\eea
and hence, we have 
\bea
w_N(t; u)&=  T(t) x - e^{L_N t} \Pi_N x +  
\int_0^t \big( T(t-s) -  e^{L_N (t -s )} \Pi_N  \big)  F(y(s; u)) \d s  \\
& \hspace{3em} +  \int_0^t e^{L_N (t -s )} \Pi_N \big(F(y(s; u)) - F(y_N(s; u)) \big ) \d s \\
& \hspace{6em}+ \int_0^t \Big( T(t-s) - e^{L_N (t -s )} \Pi_N \Big) \mathfrak{C} (u(s)) \d s. \label{eq:residual}
\eea

Let us introduce for every $s$ in $[0, T]$, $x$ in $\cH$, and $u$ in $\mathcal{U}$,
\begin{subequations}\label{eq:symbols_residual}
\begin{align}
 r_N(s; u) & := \|y(s; u)  - y_N(s; u) \|_{\cH}, \;\;  \label{totoa} \\
\zeta_N(x) & := \sup_{t\in[0, T]} \|T(t) x - e^{L_N t} \Pi_N x\|_{\cH}, \;\;  \label{totob}\\ 
 d_N(s; u) & := \sup_{t\in[s, T]} \| \big( T(t-s) -  e^{L_N (t -s )} \Pi_N \big) F(y(s; u)) \|_{\cH},\label{totoc} 
\end{align} 
 \end{subequations}
and for almost every $s$ in $[0, T]$,
\be\label{totod}
\widetilde{d}_N (s; u) := \sup_{t\in[s, T]} \| \big( T(t-s) -  e^{L_N (t -s )} \Pi_N \big) \mathfrak{C}(u(s)) \|_{\cH}. 
\ee

For $x$ in $\cH$ and $u$ in $\mathcal{U}$, let us denote by $\mathfrak{B}$ the closed ball in $\cH$ with radius $C$ centered at the origin where $C$ is the upper bound in estimate \eqref{Eq_y_local-in-u_bounds} for the Galerkin solutions (Assumption {\bf (A4)}). 

Since by assumption, $t\mapsto y(t; x, u)$ lies in $ C([0,T],\cH)$ for any $x$ in $\cH$ and $u$ in $\mathcal{U}$ given by \eqref{Eq_admissible_set_LpV}, one can assume without loss of generality that $y(t; x, u)$ stays in $\mathfrak{B}$ for all $t$ in $[0,T]$, by possibly redefining $C$.

We obtain  then  from \eqref{eq:residual} that
\bea \label{eq:residual2}
 r_N(t; u)  & \le \zeta_N(x) + \int_0^t \|e^{L_N (t -s )} \Pi_N \big(F(y(s; u)) -F(y_N(s; u)) \big )\|_{\cH} \d s \\ 
 & \hspace{12.5em} +  \int_0^t d_N(s; u) \d s +\int_0^t  \widetilde{d}_N(s; u) \d s\\
& \le \zeta_N(x) + \int_0^t d_N(s; u) \d s + M \mbox{Lip}(F\vert{_\mathfrak{B}}) \int_0^t e^{\omega (t -s )}  r_N(s; u) \d s \\
 & \hspace{20em}+   \int_0^t  \widetilde{d}_N(s; u) \d s\\
& \le \zeta_N(x) + \int_0^t d_N(s; u) \d s + M\mbox{Lip}(F\vert{_\mathfrak{B}})  \, e^{\omega T} \int_0^t  r_N(s; u) \d s  \\
 & \hspace{20em} +   \int_0^t  \widetilde{d}_N(s; u) \d s,
\eea
where we have used that $F$  is locally Lipschitz (Assumption {\bf (A3)}) and the uniform bound  \eqref{Eq_control_linearflow}  of  Assumption~{\bf (A1)}.

It follows then from Gronwall's inequality that for all $t$ in $[0,T]$,
\be \label{eq_rn_gronwall}
\hspace{-.95ex}r_N(t; u) \le \Big(\zeta_N(x)  +  \hspace{-.33ex}\int_0^T  \hspace{-.33ex} d_N(s; u) \d s +  \int_0^T  \hspace{-.33ex} \widetilde{d}_N(s; u) \d s\Big) \exp\big(M \mbox{Lip}(F\vert{_\mathfrak{B}}) e^{\omega T} T \big).
\ee

We are thus left with the estimation of $\zeta_N(x)$,  $\int_0^T d_N(s; u) \d s$ and $\int_0^T  \widetilde{d}_N(s; u) \d s$ as $N \rightarrow \infty$. Note that the assumptions {\bf (A1)} and {\bf (A2)} allow us to use a version of the Trotter-Kato theorem \cite[Thm.~4.5, p.88]{Pazy83} which implies 
together with \eqref{Eq_identity_approx} that 
\be \label{eq:linear_pointwise_convergence}
\lim_{N\rightarrow \infty} e^{L_N t} \Pi_N \phi  = T(t) \phi, \quad \Forall \phi \in \cH,
\ee
uniformly in $t$ lying in bounded intervals; see also \cite[footnote 6]{CGLW15}.

It follows that 
\be  \label{eq:est_initial}
\lim_{N\rightarrow \infty} \zeta_N(x)=0,
\ee
and that $d_N(\cdot; u)$ and $\widetilde{d}_N(\cdot; u)$ converge point-wisely to zero on $[0,T]$, i.e.,
\be \label{eq:dn_pointwise_convergence}
\lim_{N\rightarrow \infty} d_N(s; u)  = 0, \qquad \Forall s \in [0,T],
\ee
and 
\be \label{eq:tilde_dn_pointwise_convergence}
\lim_{N\rightarrow \infty} \widetilde{d}_N(s; u)  = 0,  \textrm{ for a.e. } s \in [0,T].
\ee

On the other hand, by using \eqref{Eq_control_T_t} and \eqref{Eq_control_linearflow} and from the local Lipschitz assumption on $F$, we get for $0\le s \le t \le T$,
\bea
\| \big( T(t-s) -  e^{L_N (t -s )} & \Pi_N \big) F(y(s; u)) \|_{\cH} \\
& \le 2Me^{\omega (t-s)}\|F(y(s; u)) \|_{\cH} \\
& \le 2Me^{\omega (t-s)}\Big(\|F(y(s; u)) - F(0) \|_{\cH} +\|F(0) \|_{\cH} \Big) \\
& \le 2Me^{\omega (t-s)} \big(\mbox{Lip}(F\vert{_\mathfrak{B}}) \, \|y(s; u) \|_{\cH}  + \|F(0)\|_{\cH}\big), 
\eea
which implies 
\be \label{eq:dn_dominated}
d_N(s,u) \le 2Me^{\omega T} \big(\mbox{Lip}(F\vert{_\mathfrak{B}})  \, \|y(s; u) \|_{\cH}  + \|F(0)\|_{\cH}\big), \qquad \Forall s \in [0,T]. 
\ee
Since $y(\cdot; u)$  lies in $C([0,T]; \cH)$, the mapping $s\mapsto \|y(s,u)\|_{\cH}$ is in particular integrable on $[0,T]$, and the Lebesgue dominated convergence theorem allows us to conclude from \eqref{eq:dn_pointwise_convergence} and \eqref{eq:dn_dominated} that 
\be \label{eq:est_dn_vf}
\lim_{N\rightarrow \infty} \int_0^T d_N(s; u) \d s  =0.
\ee

Let us estimate  $\int_0^T \widetilde{d}_N(s; u) \d s$ as $N \rightarrow \infty$.  By using the growth condition \eqref{Eq_poly_growth_C}, we get for a.e. $s \in [0, T]$ and all $t \in [s, T]$
\be \label{eq:tilde_dn_dominated}
\| \big( T(t-s) -  e^{L_N (t -s )} \Pi_N \big) \mathfrak{C}(u(s)) \|_{\cH} \le 2 M e^{\omega T} \Big(\gamma_1 \|u(s)\|^p_V + \gamma_2\Big).
\ee
Since $u$ lies in $L^q([0,T];V)$ with $q\geq p$, then $u$ lies in $L^p([0,T];V)$ and  the right hand side (RHS) of \eqref{eq:tilde_dn_dominated} is integrable on $[0,T]$. The Lebesgue dominated  convergence theorem allows us then to conclude from \eqref{eq:tilde_dn_pointwise_convergence} and \eqref{eq:tilde_dn_dominated} that
\be \label{eq:est_tilde_dn_vf}
\lim_{N\rightarrow \infty}  \int_0^T \widetilde{d}_N(s; u)  \d s  =0.
\ee
The desired convergence result \eqref{local_in_u_conv_Goal} follows now from \eqref{eq_rn_gronwall} by using \eqref{eq:est_initial}, \eqref{eq:est_dn_vf} and \eqref{eq:est_tilde_dn_vf}.
\ep

\subsection{Local-in-$u$ approximation result} \label{Sect_local_convergence}  

In this section, we consider for $q\geq 1$, $\mathcal{U}_{ad}$, to be the subset of $L^q(0,T; V)$, constituted by measurable functions that 
take values in  $U$, a bounded subset of the Hilbert space $V$. 
In other words, 
\be\label{Eq_U_bounded}
\mathcal{U}_{ad}:=\{f\in L^q(0,T; V)\;:  \; f(s) \in U \textrm{ for a.e. } s \in [0,T]\},\; \;q\geq 1.
\ee
The set $\mathcal{U}_{ad}$ will be endowed with the induced topology from that of $L^q(0,T; V)$.

We present hereafter a natural property that is derived from our working assumptions, namely that given an finite-dimensional approximation $\cH_N$ of $\cH$, the residual energy of solutions to the IVP \eqref{ODE}, i.e.~
\bes
\|(\mbox{Id}_{\cH} - \Pi_N) y(t;x,w)\|_{\cH},
\ees
 can be made arbitrarily small as $N\rightarrow \infty$ and uniformly in $w$, provided that $w$ lies within a sufficiently small open set of $\mathcal{U}_{ad}$ given in \eqref{Eq_U_bounded}.

This is the purpose of Lemma \ref{Lem_A7_local_verion} which boils down to proving the continuity of the mapping $u\mapsto y(t;x,u)$; see \eqref{local_uniformity_highmodes_est1} below. As a consequence a local-in-$u$ approximation result is naturally inferred; see Lemma \ref{Lem_local_in_u_est}. However, as pointed out and amended in Sect.~\ref{Sect_uniform_convergence} below, this is insufficient to guarantee convergence results for the value functions associated with Galerkin approximations of optimal control problems subordinated to \eqref{ODE}.

The merit of Lemma \ref{Lem_A7_local_verion} below is nevertheless not only to identify a symptom, but also to help us propose a cure.
Indeed, by requiring the residual energy of the solution to the IVP \eqref{ODE} to vanish uniformly (in $u$) as $N\rightarrow \infty$, we are able to conclude about the desired convergence results for the value functions. The latter uniform property (i.e.~the ``cure'') is shown below to hold for a broad class of IVPs; see Sect.~\ref{Sect_examples}.  For the moment, let us present the ``symptom,'' i.e.~the local-in-$u$ approximation results.  For that purpose, we start with a local-in-$u$
 estimate about the residual energy,  
 \bes
 \|(\mbox{Id}_{\cH} - \Pi_N) y(t;x,w)\|_{\cH},
\ees
where $w$ lives in some neighborhood of $u.$

\bl \label{Lem_A7_local_verion}

Assume that {\bf (A0)}--{\bf (A4)} hold. Assume furthermore that $\mathfrak{C}:V\rightarrow \cH$ is locally Lipschitz  and that the admissible controls lie in $\mathcal{U}_{ad}$ given by \eqref{Eq_U_bounded} and endowed with the induced $ L^q(0,T; V)$-topology, for $q > 1$.

Then, for any $(x,u)$ in $\cH \times \mathcal{U}_{ad}$  and any $\epsilon>0$ there exists a neighborhood $\mathcal{O}_u \subset \mathcal{U}_{ad}$ of $u$ and $N_0\geq 1$ such that the mild solution $y(t;x,u)$ to \eqref{ODE}  satisfies
\be \label{Est_local_uniformity_highmodes}
\sup_{w \in \mathcal{O}_u} \sup_{t\in[0,T]} \|(\mbox{Id}_{\cH} - \Pi_N) y(t;x,w)\|_{\cH} \le \epsilon, \qquad \Forall N \ge N_0.
\ee

\el

\bp

As we will see, we mainly need to show that the solution $y(t;x,u)$ to the IVP \eqref{ODE} depends continuously on the control $u$ in $\mathcal{U}_{ad}$, endowed with the $L^q(0,T; V)$-topology. The above estimate \eqref{Est_local_uniformity_highmodes} follows then directly from this continuous dependence as explained at the end of the proof.

Given $u$  in $\mathcal{U}_{ad}$ and $r > 0$, we denote by $\mathfrak{B}_{\scriptscriptstyle\mathcal{U}_{ad}}(u,r)$ the closed ball  centered at $u$ with radius $r$, for the induced $ L^q(0,T; V)$-topology on $\mathcal{U}_{ad}$.   For any $w$ in $\mathfrak{B}_{\scriptscriptstyle\mathcal{U}_{ad}}(u,r)$, $x$ in $\cH$ and $t$ in $[0,T]$, consider $\phi(t):=y(t;x, u)  - y(t;x,w)$ and note that $\phi(0)=0$. It follows from \eqref{Eq_mild} that
\bea 
\phi(t) &=  \int_0^t T(t-s)\big(F(y(s;x,u)) - F(y(s;x,w)) \big)\d s \\
& \hspace{6em}+  \int_0^t T(t-s) \big( \mathfrak{C} (u(s)) - \mathfrak{C} (w(s))\big) \d s. 
\eea
We have then
\bea \label{local_uniformity_highmodes_est0}
\|\phi(t)\|_{\cH} & \leq \int_0^t M e^{\omega(t-s)} \|F(y(s;x,u)) - F(y(s;x,w))\|_{\cH} \d s \\
& \hspace{6em} + \int_0^t  M e^{\omega(t-s)} \|\mathfrak{C} (u(s)) - \mathfrak{C} (w(s))\|_{\cH} \d s.
\eea
Let $C>0$ be chosen such that $\|y(t;x,u)\|_{\cH} \le C$ for all $t\in [0,T]$, and let  $w$ be in $\mathfrak{B}_{\mathcal{U}_{ad}}(u,r)$, we define then
\be \label{Def_t*}
t^*:=\max\{t \in [0,T] \; : \; \|y(t;x,w)\|_{\cH} < 2 C\}.
\ee

First let us note that $t^\ast >0$. Indeed recalling that $ \|y(t;x,u)\|_{\cH} \le C$ by assumption, we have in particular that 
$\|x\|_{\cH} \le C$. Now due to the continuity for any $(x,w)$ in $\cH \times \mathcal{U}_{ad}$ of the mapping 
\beas
&[0,T] \longrightarrow \cH\\
&t\longmapsto y(t;x,w),
\eeas
(since $y(t;x,w)$ is a mild solution), we infer, since $\|x\|_{\cH} \le C$, for each $w$ in $\mathfrak{B}_{\scriptscriptstyle\mathcal{U}_{ad}}(u,r)$ the existence of $t'(w)>0$ such that 
\bes
\|y(t;x,w)\|_{\cH} < 2 C,  \; \Forall t \in [0,t'(w)],
\ees
and therefore $t^\ast >0$.

Denote also $\mathfrak{B}_{ \scriptscriptstyle\cH} \subset \cH$ the closed ball centered at the origin with radius $2C$. 
Let $\mathfrak{B}_{\scriptscriptstyle V}$ be the smallest closed ball in $V$ containing the bounded set $U$.
By using the local Lipschitz property of $F$ and $\mathfrak{C}$, we obtain from \eqref{local_uniformity_highmodes_est0} that
\bea
\hspace{-1ex}\|\phi(t)\|_{\cH} & \le \int_0^t M e^{\omega(t-s)} \Lip(F\vert_{\mathfrak{B}_{\scriptscriptstyle \cH}})\|\phi(s)\|_{\cH} \d s \\
& \hspace{5em} + \int_0^t  M e^{\omega(t-s)} \Lip(\mathfrak{C}\vert_{\mathfrak{B}_{\scriptscriptstyle V}}) \|u(s) - w(s)\|_{V} \d s \\
& \le M\Lip(F\vert_{\mathfrak{B}_\cH}) \int_0^t e^{\omega(t-s)} \|\phi(s)\|_{\cH} \d s \\
& \hspace{5em} + e^{\omega t^*} M \Lip(\mathfrak{C}\vert_{\mathfrak{B}_V}) \int_0^t  \|u(s) - w(s)\|_{V} \d s,  \; \Forall t \in [0, t^*].
\eea
By H\"older's inequality, we have 
\be
\int_0^t  \|u(s) - w(s)\|_{V} \d s \le t^{\frac{p-1}{p}}\|u-w\|_{L^p(0,T;V)} \le t^{\frac{p-1}{p}} r, 
\ee
which leads to
\bea
\|\phi(t)\|_{\cH}& \le M\Lip(F\vert_{\mathfrak{B}_{\scriptscriptstyle \cH}}) \int_0^t e^{\omega(t-s)} \|\phi(s)\|_{\cH} \d s\\
& \hspace{6em}  + (t^*)^{\frac{p-1}{p}} r e^{\omega t^*} M\Lip(\mathfrak{C}\vert_{\mathfrak{B}_{\scriptscriptstyle V}}),  \; \Forall t \in [0, t^*].
\eea

It follows then from Gronwall's inequality that
\bea  \label{local_uniformity_highmodes_est0_b}
\hspace{-1em}\|\phi(t)\|_{\cH} & \le  (t^*)^{\frac{p-1}{p}} r M \Lip(\mathfrak{C}\vert_{\mathfrak{B}_V}) \exp{(2 \omega t^\ast + t^\ast M\Lip(F\vert_{\mathfrak{B}_\cH}))}\\
& \le  T^{\frac{p-1}{p}} r M \Lip(\mathfrak{C}\vert_{\mathfrak{B}_V})\exp{(2 \omega T + T M\Lip(F\vert_{\mathfrak{B}_\cH}))}\; , \quad \Forall t \in  [0,t^*].
\eea
Now, let $C_\ast:=T^{\frac{p-1}{p}}  M \Lip(\mathfrak{C}\vert_{\mathfrak{B}_V})\exp{(2 \omega T + T M\Lip(F\vert_{\mathfrak{B}_\cH}))}$ and 
\bes  \label{local_uniformity_highmodes_est0_c}
r_1 := \frac{C}{2 C_\ast}.
\ees
We claim that $t^* = T$ if $r \le r_1$. Otherwise, if $t^*< T$, applying \eqref{local_uniformity_highmodes_est0_b} at $t=t^*$, we get
\be
\|\phi(t^*)\|_{\cH} \le \frac{C}{2},
\ee
which leads then to 
\be
\|y(t^*;x,w)\|_{\cH}  \le \|y(t^*;x,u)\|_{\cH} + \|\phi(t^*)\|_{\cH} \le \frac{3}{2}C.
\ee
This last inequality contradicts with the definition of $t^*$ given by \eqref{Def_t*}. 

We obtain thus for each $r \in (0, r_1]$ that
\be  \label{local_uniformity_highmodes_est0_c}
\|\phi(t)\|_{\cH} \le r C_\ast\; , \qquad \Forall t \in  [0,T].
\ee
Now, it follows from \eqref{local_uniformity_highmodes_est0_c} that for any fixed $\epsilon > 0$ there exists $r_\epsilon > 0$ sufficiently small such that
\be
\|\phi(t)\|_{\cH} \le \frac{1}{2} \epsilon, \qquad \Forall t\in[0,T].
\ee
Recalling the definition of $\phi$, we have thus proved that
\be \label{local_uniformity_highmodes_est1}
\sup_{w\in \mathfrak{B}_{\scriptscriptstyle\mathcal{U}_{ad}}(u,r_\epsilon)} \sup_{t\in[0,T]}\|y(t;x,u) - y(t;x,w)\|_{\cH} \le \frac{1}{2}\epsilon.
\ee

We turn now to the last arguments needed to prove \eqref{Est_local_uniformity_highmodes}. It consists first to note that the convergence property \eqref{Eq_identity_approx} and the continuity of $t\mapsto y(t; x, u)$ imply, for the given $\epsilon > 0$, the existence of a positive integer  $N_0$ for which 
\be \label{local_uniformity_highmodes_est2}
 \sup_{t\in[0,T]} \|(\mbox{Id}_{\cH} - \Pi_N) y(t;x,u)\|_{\cH} \le \frac{1}{2}\epsilon, \qquad \Forall N \ge N_0.
\ee
Now by defining $\Pi_N^{\perp}:=\mbox{Id}_{\cH} - \Pi_N$, and noting that
\bea
\|\Pi_N^{\perp} y(t;x,w)\|_{\cH}& \leq \|\Pi_N^{\perp} (y(t;x,w)-y(t;x,u))\|_{\cH} +\|\Pi_N^{\perp} y(t;x,u)\|_{\cH}\\
                                               & \leq \|y(t;x,w)-y(t;x,u)\|_{\cH} +\|\Pi_N^{\perp} y(t;x,u)\|_{\cH},
\eea
we conclude---from \eqref{local_uniformity_highmodes_est1} and \eqref{local_uniformity_highmodes_est2}---to the desired estimate \eqref{Est_local_uniformity_highmodes} with $\mathcal{O}_u$ taken  to be $\mathring{\mathfrak{B}}_{\mathcal{U}_{ad}}(u,r_{\epsilon})$, the open ball in $\mathcal{U}_{ad}$ of radius $r_\epsilon$.
\ep

We conclude this section with a local-in-$u$ approximation result.
\bl \label{Lem_local_in_u_est}
Assume the assumptions of  Lemma~\ref{Lem_A7_local_verion} hold. Then, for any $(x,u)$ in $\cH \times \mathcal{U}_{ad}$  and any $\epsilon>0$ there exists a neighborhood $\mathcal{O}_u \subset \mathcal{U}_{ad}$ of $u$ and $N_0\geq 1$ such that the mild solution $y(t;x,u)$ to \eqref{ODE}  satisfies
\be \label{Eq_local_in_u_est_goal}
\sup_{w \in \mathcal{O}_u} \sup_{t\in[0,T]} \|y_N(t;\Pi_N x,w) - y(t;x,w)\|_{\cH} \le \epsilon, \qquad \Forall N \ge N_0,
\ee
where $y_N$ denotes the solution to the Galerkin approximation \eqref{ODE_Galerkin}.
\el

\bp
First, let us remark that even if here $\mathfrak{C}$ does not satisfy the growth condition \eqref{Eq_poly_growth_C}, one can still derive the trajectory-wise convergence result \eqref{local_in_u_conv_Goal} by exploiting the fact that $u$ lies in $\mathcal{U}_{ad}$ and $\mathfrak{C}$ is locally Lipschitz. 
The only change in the proof consists indeed of replacing (for a.e. $s \in [0, T]$ and all $t \in [s, T]$) the estimate  \eqref{eq:tilde_dn_dominated} by the following: 
\be \label{eq:tilde_dn_dominated_new}
\| \big( T(t-s) -  e^{L_N (t -s )} \Pi_N \big) \mathfrak{C}(u(s)) \|_{\cH} \le 2 M e^{\omega T} \Lip(\mathfrak{C}\vert_{\mathfrak{B}_V}) \|u(s)\|_V, 
\ee
where $\mathfrak{B}_{\scriptscriptstyle V}$ denotes the smallest closed ball in $V$ containing the bounded set $U$. Since $u$ lies in $L^q([0,T];V)$, then $u$ lies in $L^1([0,T];V)$ and  the RHS of \eqref{eq:tilde_dn_dominated_new} is integrable on $[0,T]$. We can then, this time from \eqref{eq:tilde_dn_dominated_new}, still use the Lebesgue dominated  convergence theorem to have \eqref{eq:est_tilde_dn_vf} to hold, without thus assuming the growth condition \eqref{Eq_poly_growth_C}.  

We explain now how to derive \eqref{Eq_local_in_u_est_goal}  from \eqref{local_in_u_conv_Goal}.  
Recall that from \eqref{local_uniformity_highmodes_est1} derived  in the proof of Lemma \ref{Lem_A7_local_verion}, the mild solutions to the IVP \eqref{ODE} depend continuously on the control $u$ in $\mathcal{U}_{ad}$, endowed with the $L^q([0,T];V)$-topology. An estimate similar to \eqref{local_uniformity_highmodes_est1}  can be derived for the solutions to Galerkin approximation \eqref{ODE_Galerkin}, ensuring thus also their continuous dependence on $u$.    

As a consequence, for any given $\epsilon >0$ and $u$ in $\mathcal{U}_{ad}$, there exists a neighborhood $\mathcal{O}_u \subset \mathcal{U}_{ad}$ containing $u$ for which 
\bea  \label{Eq_local_in_u_est1}
& \sup_{w\in \mathcal{O}_u} \sup_{t\in[0,T]}\|y(t;x,u) - y(t;x,w)\|_{\cH} \le \frac{1}{3}\epsilon, \\
& \sup_{w\in \mathcal{O}_u} \sup_{t\in[0,T]}\|y_N(t;\Pi_N x,u) - y_N(t;\Pi_N x,w)\|_{\cH} \le \frac{1}{3}\epsilon, \quad \Forall N \in \mathbb{Z}_+^\ast.
\eea
On the other hand, the trajectory-wise convergence  \eqref{local_in_u_conv_Goal} ensures the existence of a positive integer $N_0$, such that
\be \label{Eq_local_in_u_est2}
\sup_{t \in [0, T]} \|y_N(t; \Pi_N x, u) - y(t; x,u)\|_{\cH} \le \frac{1}{3}\epsilon, \qquad \Forall N \ge N_0.
\ee
Since 
\bea
\|y_N(t;\Pi_N x,w) - y(t;x,w)\|_{\cH} & \le \|y_N(t;\Pi_N x,w) - y_N(t;\Pi_N x,u)\|_{\cH} \\
& \qquad + \|y_N(t;\Pi_N x,u) - y(t; x,u)\|_{\cH} \\
& \qquad  + \|y(t;x,u) - y(t;x,w)\|_{\cH},
\eea
the desired estimate \eqref{Eq_local_in_u_est_goal} follows from \eqref{Eq_local_in_u_est1} and \eqref{Eq_local_in_u_est2}.
\ep

\subsection{Convergence of Galerkin approximations: Uniform-in-$u$ result} \label{Sect_uniform_convergence}
In the previous section, the local-in-$u$ approximation result has been derived under a boundedness assumption on $U$ arising 
in the definition of $\mathcal{U}_{ad}$. Here, compactness  will substitute the boundedness to derive  convergence results that are uniform in $u$.  More precisely, we will assume for that purpose
\bi
\item[{\bf (A5)}]   {\it The set of admissible controls $\mathcal{U}_{ad}$ is given by \eqref{Eq_U_bounded} with  
$U$ being a compact subset of the Hilbert space~$V$.}
\ei

We will make also use of the following assumptions.
\bi
\item[{\bf (A6)}] {\it Let $\mathcal{U}_{ad}$ be given by  \eqref{Eq_U_bounded}}.  {\it For each $T>0$, $(x,u)$ in $\cH \times \mathcal{U}_{ad}$, the problem \eqref{ODE} admits a unique mild solution $y(\cdot; x, u)$ in $C([0,T],\cH)$, and for each $N\geq 1$, its Galerkin approximation \eqref{ODE_Galerkin} admits a unique solution $y_N(\cdot; \Pi_N x, u)$ in $C([0,T],\cH)$. Moreover, there exists a constant $\mathcal{C}:=\mathcal{C}(T,x)$ such that}
\bea \label{Eq_y_uniform-in-u_bounds}
& \|y(t; x, u)\|_{\cH} \le \mathcal{C}, \qquad \Forall t\in[0,T], \; u \in  \mathcal{U}_{ad}, \\
& \|y_N(t; \Pi_N x, u)\|_{\cH} \le \mathcal{C}, \qquad \Forall t\in[0,T],\; N \in \mathbb{Z}_+^\ast, \; u \in  \mathcal{U}_{ad}.
\eea

\item[{\bf (A7)}] {\it  Let $\mathcal{U}_{ad}$ be given by  \eqref{Eq_U_bounded}. For each fixed $T > 0$, and any mild solution $y(\cdot;x,u)$ to \eqref{ODE} with  $(x,u)$ in $\cH \times \mathcal{U}_{ad}$, it holds that}
\be \label{Est_uniformity_highmodes}
\lim_{N \rightarrow \infty}  \sup_{u\in \mathcal{U}_{ad}} \sup_{t\in[0,T]} \|(\mbox{Id}_{\cH} - \Pi_N) y(t;x,u)\|_{\cH} =0.
\ee
\ei

\br \label{Rmk:unif_bdd_on_soln}
\hspace*{2em}  \vspace*{-0.4em}
\bi
\item[(i)]  Note that {\bf (A6)} differs from {\bf (A4)} by the inequality $\|y(t; x, u)\|_{\cH}\leq \mathcal{C}$, and that 
  the constant in \eqref{Eq_y_uniform-in-u_bounds}  is independent of the control $u$.
\item[(ii)]  Let $u$ be in $\mathcal{U}_{ad}$  given by  \eqref{Eq_U_bounded}. Then uniform bounds such as in \eqref{Eq_y_uniform-in-u_bounds} are guaranteed if e.g.~an {\it a priori} estimate of the following type holds for the IVPs \eqref{ODE} and \eqref{ODE_Galerkin}:
\be
\hspace{5ex}\underset{t \in [0,T]}\sup \| y(t;x,u)\|_{\cH}\leq \alpha ( \| x\|_{\cH}+ \|u\|_{L^{q}(0,T;V)}) +\beta,\;\; \;\; \alpha >0, \;\; \beta\geq 0.
\ee
See e.g.~\cite{Cazenave_al98,Tem97} for such a priori bounds for nonlinear partial differential equations.
Such bounds can also be derived for nonlinear systems of delay differential equations (DDEs); see in that respect the proofs of \cite[Estimates (4.75)]{CGLW15} and \cite[Corollary 4.3]{CGLW15} which can be adapted to the case of controlled DDEs.

\item[(iii)] We refer to Sect.~\ref{Sect_examples} below for a broad class of IVPs for which Assumption {\bf (A7)} holds. 
\ei
\er

As a preparatory lemma to Theorem \ref{Lem:uniform_in_u_conv}, we first prove that given $x$ in $\cH$, 
\bes
(\mbox{Id}_{\cH} - \Pi_N) F(y(t;x,u)) \underset{N\rightarrow \infty}\longrightarrow 0,
\ees
 uniformly in $u$ lying in $\mathcal{U}_{ad}$ and $t$ in $[0,T]$. For that only assumptions {\bf (A3)}, {\bf (A6)} and {\bf (A7)} are used, and $U$ involved in the definition \eqref{Eq_U_bounded} of $\mathcal{U}_{ad}$ is assumed to be bounded (not necessarily compact).  
We have

\bl  \label{Lem:highmode_F_est}
Assume that {\bf (A3)}, {\bf (A6)} and {\bf (A7)} hold. Then, 
\be \label{Eq_highmode_F_est}
\lim_{N\rightarrow \infty} \sup_{u\in \mathcal{U}_{ad}} \sup_{t\in[0,T]} \|(\mathrm{Id}_{\cH} - \Pi_N) F(y(t;x,u))\|_{\cH} =0.
\ee

\el

\bp

For any given $\epsilon > 0$, by {\bf (A7)}, there exists $N_0$ in $\mathbb{Z}_+^\ast$ such that
\be \label{Eq_unif_est_ys}
\sup_{u\in \mathcal{U}_{ad}} \sup_{t\in[0,T]} \|(\mbox{Id}_{\cH} - \Pi_{N}) y(t;x,u)\|_{\cH} \le \epsilon, \quad \Forall N \ge N_0.
\ee
Note also that for all $x$ in $\cH$, and $u$ in $\mathcal{U}_{ad}$ given by  \eqref{Eq_U_bounded}
 \bea \label{Eq_unif_est_F}
\hspace{-2ex} \|(\mbox{Id}_{\cH}  \hspace{-.5ex} -\hspace{-.5ex}  \Pi_N) F(y(t;x,u))\|_{\cH}  &  \hspace{-.5ex}\le  \hspace{-.5ex} \| (\mbox{Id}_{\cH}  \hspace{-.5ex}- \hspace{-.5ex} \Pi_N) F(\Pi_{N_0}  y(t; x, u)) \|_{\cH} \\
& \quad  \hspace{-2ex}+  \| (\mbox{Id}_{\cH}  \hspace{-.5ex}-  \hspace{-.5ex}\Pi_N) \big(F(y(t; x, u)) \hspace{-.5ex} - \hspace{-.5ex} F(\Pi_{N_0} y(t; x, u))\big) \|_{\cH}.
\eea
For the second term on the RHS, we have
\bea 
& \| (\mbox{Id}_{\cH} - \Pi_N) \big(F(y(t; x, u)) - F(\Pi_{N_0} y(t; x, u))\big) \|_{\cH} \\
& \le \|F(y(t; x, u)) - F(\Pi_{N_0} y(t; x, u))\|_{\cH} \\
& \le  \Lip(F\vert_{\mathfrak{B}})\| (\mbox{Id}_{\cH} - \Pi_{N_0} ) y(t; x, u) \|_{\cH}, \qquad  \Forall t\in [0,T], \; u \in \mathcal{U}_{ad},
\eea
where $\mathfrak{B}$ denotes the ball in $\cH$ centered at the origin with radius $ \mathcal{C}$ given by \eqref{Eq_y_uniform-in-u_bounds}.

It follows then from \eqref{Eq_unif_est_ys} that
\be \label{Eq_unif_est_Fs}
\sup_{u\in \mathcal{U}_{ad}} \sup_{t\in[0,T]} \| (\mbox{Id}_{\cH} - \Pi_N) \big(F(y(t; x, u)) - F(\Pi_{N_0} y(t; x, u))\big) \|_{\cH} \le \Lip(F\vert_{\mathfrak{B}}) \; \epsilon.
\ee

Due to {\bf (A6)}, for each $x$ in $\cH$,  $\Pi_{N_0} y(s; x, u)$ lies in a uniformly (in $s$ and $u$) bounded subset  of $\cH_{N_0} $. The finite dimensionality of $\cH_{N_0}$ (as a Galerkin subspace) ensures the compactness of the set 
\bes
\mathcal{E}_x:=\overline{\{\Pi_{N_0} y(s; x, u), \; u \in \mathcal{U}_{ad}, \; s\in [0, T]\}}^{\cH}.
\ees

For each $z$ in $\mathcal{E}_x$ and $\epsilon>0$, due to  \eqref{Eq_identity_approx}  there exists an integer $N_1(z)$ such that 
\bes
\|  (\mbox{Id}_{\cH} - \Pi_N) F(z)\| \leq \frac{\epsilon}{2}, \; \; \Forall N \ge N_1(z).
\ees
Since $F$ is continuous, there exists a neighborhood $\mathcal{N}_z$ of $z$ in $\cH_{N_0}$ such that 
\be\label{eq_control_F}
\|  (\mbox{Id}_{\cH} - \Pi_N) F(w)\| \leq \epsilon, \; \; \Forall N \ge N_1(z), \; w \in \mathcal{N}_z.
\ee
From the compactness of $\mathcal{E}_x$ we can extract a finite cover of $\mathcal{E}_x$ by such neighborhoods $ \mathcal{N}_z$ for which
\eqref{eq_control_F} holds, and thus one can ensure the existence of an integer $N_1$ for which 
\be
\sup_{z \in \mathcal{E}_x} \|  (\mbox{Id}_{\cH} - \Pi_N)   F(z) \|_{\cH} \le \epsilon, \qquad \Forall N \ge N_1.
\ee
This last inequality ensures for each $x$ in $\cH$, the existence of an integer $N_1$ for which
\be \label{Eq_unif_est_Fn}
\sup_{u \in  \mathcal{U}_{ad}} \sup_{t\in[0, T]} \|  (\mbox{Id}_{\cH} - \Pi_N)   F(\Pi_{N_0} y(t;x, u)) \|_{\cH} \le \epsilon, \qquad \Forall N \ge N_1.
\ee
Then, \eqref{Eq_highmode_F_est} follows from \eqref{Eq_unif_est_F}, \eqref{Eq_unif_est_Fs} and \eqref{Eq_unif_est_Fn}. 
\ep

We are now in position to formulate a uniform (in $u$) version of Lemma~\ref{Lem:local_in_u_conv} in which the growth condition 
\eqref{Eq_poly_growth_C} is no longer required.

\bt  \label{Lem:uniform_in_u_conv}

Assume that {\bf (A0)}--{\bf (A3)} and {\bf (A5)}--{\bf (A7)} hold. Assume also that $\mathfrak{C}:V\rightarrow \cH$
is continuous.  Then, for any $(x,u)$ in $\cH \times \mathcal{U}_{ad}$, the mild solution $y(t;x,u)$ to \eqref{ODE}  satisfies
the following uniform convergence result
\be  \label{uniform_in_u_conv_Goal}
\lim_{N\rightarrow \infty}  \sup_{u\in \mathcal{U}_{ad}} \sup_{t \in [0, T]} \|y_N(t; \Pi_N x, u) - y(t; x,u)\|_{\cH} = 0,
\ee
where $y_N$ denotes the solution to the Galerkin approximation \eqref{ODE_Galerkin}.
\et

\bp

Compared to Lemma~\ref{Lem:local_in_u_conv}, we have replaced {\bf (A4)} by the stronger assumption {\bf (A6)} and the set of admissible controls $\mathcal{U}_{ad}$ is as given in {\bf (A5)}. By following the proof of Lemma~\ref{Lem:local_in_u_conv}, in order to obtain the uniform convergence result \eqref{uniform_in_u_conv_Goal}, it suffices to show that the two terms $\int_0^T d_N(s; u) \d s$ and $\int_0^T  \widetilde{d}_N(s; u) \d s$ involved in the RHS of \eqref{eq_rn_gronwall} converge to zero as $N \rightarrow \infty$ uniformly with respect to $u$  in $\mathcal{U}_{ad}$. 

Recall from \eqref{totoc} that  
\bes
d_N(s; u) = \sup_{t\in[s, T]} \| \big( T(t-s) -  e^{L_N (t -s )} \Pi_N \big) F(y(s; x, u)) \|_{\cH},
\ees
 which is defined for every $s$ in $[0, T]$ and $u$ in $\mathcal{U}_{ad}$.
 Thanks to Lemma~\ref{Lem:highmode_F_est}, for any fixed $\epsilon > 0$, there exists $N_0$  in  $\mathbb{Z}_+^\ast$, such that
\be  \label{eq:uniform_est_dn_1}
\|(\mbox{Id}_{\cH} - \Pi_{N_0}) F(y(s;x, u))\|_{\cH} < \epsilon, \qquad  \; s\in [0,T], \; u \in \mathcal{U}_{ad}.
\ee
Now, for $N_0$ chosen above, we have
\bea  \label{eq:uniform_est_dn_2}
&\| \big( T(t-s) -  e^{L_N (t -s )} \Pi_N \big) F(y(s; x, u)) \|_{\cH} \\
&\hspace{4em} \le  \| \big( T(t-s) -  e^{L_N (t -s )} \Pi_N \big) \Pi_{N_0} F(y(s; x, u)) \|_{\cH} \\
& \hspace{7em}  + \| \big( T(t-s) -  e^{L_N (t -s )} \Pi_N \big) (\mbox{Id}_{\cH} - \Pi_{N_0} ) F(y(s; x, u)) \|_{\cH}.
\eea
By using \eqref{Eq_control_T_t} and Assumption {\bf (A1)}, we obtain:
\bea \label{eq:uniform_est_dn_3}
 & \hspace{-3em}  \sup_{t\in[s, T]} \| \big( T(t-s) -  e^{L_N (t -s )} \Pi_N \big) (\mbox{Id}_{\cH} - \Pi_{N_0}) F(y(s; x, u)) \|_{\cH} \\
& \hspace{7em}\le  2 Me^{\omega T}\|(\mbox{Id}_{\cH} - \Pi_{N_0}) F(y(s; x, u)) \|_{\cH} \\
 & \hspace{7em} \le 2 Me^{\omega T}\epsilon,  \quad  \Forall s\in [0,T], \; u \in \mathcal{U}_{ad},
\eea
where the last inequality follows from \eqref{eq:uniform_est_dn_1}.

Due to {\bf (A6)}, for each $x$ in $\cH$,  $y(s; x, u)$ lies in a uniformly (in $s$ and $u$) bounded subset  of $\cH$. This  together with the locally Lipschitz property of $F$ implies that $\Pi_{N_0} F(y(s; x, u))$ lies in a bounded subset  of $\cH_{N_0} $ for all $u$  in $\mathcal{U}_{ad}$ and for all $s$ in $[0, T]$. The finite dimensionality of $\cH_{N_0}$ ensures then the compactness of the set 
\bes
\mathcal{E}'_x=\overline{\{\Pi_{N_0} F(y(s; x, u)), \; u \in \mathcal{U}_{ad}, \; s\in [0, T]\}}^{\cH}.
\ees

Now, for each $z$ in $\mathcal{E}'_x$ and $\epsilon>0$, the convergence property \eqref{eq:linear_pointwise_convergence} valid uniformly over bounded time-intervals allows us to ensure the existence of an integer $N_1(z)$, for which
\bes
\sup_{t\in[0, T]} \| \big( T(t) -  e^{L_N t} \Pi_N \big)  z \|_{\cH} \le \frac{\epsilon}{2}, \qquad \Forall \; N \ge N_1(z).
\ees
Then, by using \eqref{Eq_control_T_t} and Assumption {\bf (A1)}, there exists a neighborhood $\mathcal{N}_z$ of $z$ in $\cH_{N_0}$ such that 
\be \label{eq_control_F_v2}
\sup_{t\in[0, T]} \| \big( T(t) -  e^{L_N t} \Pi_N \big)  w \|_{\cH} \le \epsilon, \qquad \Forall \; N \ge N_1(z), \; w \in \mathcal{N}_z.
\ee
From the compactness of $\mathcal{E}'_x$ we can extract a finite cover of $\mathcal{E}'_x$ by such neighborhoods in which
\eqref{eq_control_F_v2} holds, and thus one can ensure the existence of an integer $N_1$ for which 
\be \label{eq_control_F_v2b}
\sup_{t\in[0, T]} \| \big( T(t) -  e^{L_N (t)} \Pi_N \big) w \|_{\cH} \le \epsilon, \qquad \Forall  N \ge N_1, \; w \in \mathcal{E}'_x.
\ee
Now, for each fixed $s$ in $[0, T]$ and $u$ in $\mathcal{U}_{ad}$, by taking $w = \Pi_{N_0} F(y(s; x, u))$, we get from 
\eqref{eq_control_F_v2b} that
\bes
\sup_{t\in[0, T]} \| \big( T(t) -  e^{L_N (t)} \Pi_N \big) \Pi_{N_0} F(y(s; x, u)) \|_{\cH} \le \epsilon, \qquad \Forall  N \ge N_1.
\ees
It follows then for  all $u$  in $\mathcal{U}_{ad}$
\be \label{eq:uniform_est_dn_4}
\hspace{-2ex}\sup_{t\in[s, T]} \| \big( T(t-s) -  e^{L_N (t -s )} \Pi_N \big)  \Pi_{N_0} F(y(s;x, u)) \|_{\cH} \le \epsilon, \;\Forall N \ge N_1, \;  s\in [0,T].
\ee

By using \eqref{eq:uniform_est_dn_3} and \eqref{eq:uniform_est_dn_4} in \eqref{eq:uniform_est_dn_2}, we get
\bes
d_N(s; u) \le (1 + 2 Me^{\omega T})\epsilon, \quad  \Forall N \ge N_1, \; s\in [0,T], \; u \in \mathcal{U}_{ad},
\ees
which leads to 
\be \label{eq:uniform_est_dn_vf}
 \sup_{u \in \mathcal{U}_{ad}} \int_0^T d_N(s; u) \d s  \le (1 + 2 Me^{\omega T}) T \epsilon, \quad  \Forall N \ge N_1.
\ee

We consider now the term $\sup_{u \in \mathcal{U}_{ad}} \int_0^T  \widetilde{d}_N(s; u) \d s$ with 
\bes
\widetilde{d}_N (s; u) = \sup_{t\in[s, T]} \| \big( T(t-s) -  e^{L_N (t -s )} \Pi_N \big) \mathfrak{C}(u(s)) \|_{\cH}
\ees 
as defined in \eqref{totod} for almost every $s$ in $[0,T]$ and every $u$ in $\mathcal{U}_{ad}$ here. 

Since the set $U \subset V$ is compact (cf.~Assumption {\bf (A5)}) and $\mathfrak{C}: V \rightarrow \cH$ is continuous, then  $\mathfrak{C}(U)$ is a compact set of $\cH$. Following a compactness argument similar to that used to derive \eqref{eq_control_F_v2b}, we can ensure the existence of an integer $N_2$ such that
\be \label{eq_control_C}
\sup_{t\in[0, T]} \|\big( T(t) -  e^{L_N (t)} \Pi_N \big) \mathfrak{C}(w)\|_{\cH} \le \epsilon, \qquad \Forall N \ge N_2, \; w \in U.
\ee
Now, for each $u$ in $\mathcal{U}_{ad}$, since $u(s)$ takes value in $U$ for almost every $s$ in $[0,T]$, we obtain from \eqref{eq_control_C} that
\be
\sup_{t\in[s, T]} \|\big( T(t-s) -  e^{L_N (t -s )} \Pi_N \big) \mathfrak{C}(u(s))\|_{\cH} \le \epsilon, \quad \text{for all} \; N \ge N_2 \text{ and a.e. } s \in [0,T].
\ee
It follows then that for all $N \geq N_2$, and $u$ in $\mathcal{U}_{ad}$,
\be  \label{eq:uniform_u_est}
\int_0^T  \widetilde{d}_N(s; u) \d s  =  \int_0^T \sup_{t\in[s, T]} \| \big( T(t-s) -  e^{L_N (t -s )} \Pi_N \big) \mathfrak{C}(u(s)) \|_{\cH} \d s \le T \epsilon,
\ee
and thus,
\be   \label{eq:uniform_est_tilde_dn_vf}
\sup_{u \in \mathcal{U}_{ad}} \int_0^T  \widetilde{d}_N(s; u) \d s \le T \epsilon,  \quad \Forall  N \ge N_2.
\ee

The desired uniform convergence of $\int_0^T d_N(s; u) \d s$ and $\int_0^T  \widetilde{d}_N(s; u) \d s$ follows from \eqref{eq:uniform_est_dn_vf} and \eqref{eq:uniform_est_tilde_dn_vf}. The proof is now complete.
\ep
\br\label{Rmk_F=0}
From the proof given above, it is clear that Assumption {\bf (A5)} is made to ensure the uniform convergence of $\int_0^T  \widetilde{d}_N(s; u) \d s$, while  Assumptions {\bf (A6)} and {\bf (A7)} are made to ensure the uniform convergence of $\int_0^T d_N(s; u) \d s$. The last two assumptions are thus not needed if the nonlinear term $F$ is identically zero.
\er

Note that one can readily check that the convergence result stated in Theorem~\ref{Lem:uniform_in_u_conv} also holds when \eqref{ODE} is initialized at any other time instance $t$ in $[0,T)$. This will be needed in the next subsection to derive approximation results for value functions associated with optimal control problems for the IVP \eqref{ODE}.

More precisely, for each $(t,x)$  in $[0,T) \times \cH$, we consider the following evolution problem 
\bea \label{ODE_t_sec3}
\frac{\d y}{\d s} &= L y + F(y) + \mathfrak{C} (u(s)),  \quad s \in (t, T], \; u\in \mathcal{U}_{ad}[t,T],\\
y(t) &=x \, \in \cH,
\eea
with
\be\label{U_ad_toto}
\mathcal{U}_{ad}[t,T]:=\{u\vert_{[t,T]} \; : \; u \in \mathcal{U}_{ad}\},
\ee
and the corresponding Galerkin approximation:
\bea \label{ODE_Galerkin_t}
\frac{\d y_N}{\d s} &= L_N y_N + \Pi_N F(y_N) + \Pi_N \mathfrak{C} (u(s)), \qquad s \in (t, T], \; u\in \mathcal{U}_{ad}[t,T],\\
y_N(t) &= x_N, \qquad x_N := \Pi_N x \in \cH_N.
\eea

Hereafter, we denote by $y_{t,x}(\cdot;u)$ the solution to \eqref{ODE_t_sec3} emanating from $x$ at time $t$, and by $y^N_{t,x_N}(\cdot;u)$ the solution to \eqref{ODE_Galerkin_t} emanating from $x_N:=\Pi_N x$ at time $t$.
We have then the following corollary of Theorem~\ref{Lem:uniform_in_u_conv}:

\bc \label{Lem:uniform_conv_locally_Lip_t_v2}
Assume that the conditions of Theorem~\ref{Lem:uniform_in_u_conv} hold. Then, for any  mild solution $y_{t,x}(\cdot;u)$ to \eqref{ODE_t_sec3} over $[t,T]$, with $x$  in $\cH$ and $u$ in $\mathcal{U}_{ad}[t,T]$ given in \eqref{U_ad_toto},  the following convergence result is satisfied:
\be \label{uniform_conv_Goal_t_v2}
\lim_{N\rightarrow \infty} \sup_{t\in[0,T]} \sup_{u\in\mathcal{U}_{ad}[t,T]} \sup_{s \in [t, T]} \| y^N_{t,x_N}(s; u) - y_{t,x}(s;u)\|_{\cH} = 0,
\ee
where $ y^N_{t,x_N}$ denotes the solution to the Galerkin approximation \eqref{ODE_Galerkin_t}.
\ec

\bp
Since both the linear operator $L$ and the nonlinearity $F$ in \eqref{ODE_t_sec3} are time independent, then the supremum of $\sup_{u\in\mathcal{U}_{ad}[t,T]} \sup_{s \in [t, T]} \| y^N_{t,x_N}(s; u) - y_{t,x}(s;u)\|_{\cH}$ as $t$  varies in $[0,T]$, is achieved at $t=0$, i.e.,
\bea
\hspace{-.3ex} \sup_{u\in\mathcal{U}_{ad}[t,T]} \underset{\substack{t\in[0,T]\\{ s \in [t, T]}}}\sup \| y^N_{t,x_N}(s; u) \hspace{-.3ex}- \hspace{-.3ex}y_{t,x}(s;u)\|_{\cH} &= \hspace{-2.3ex}\sup_{u\in\mathcal{U}_{ad}[0,T]} \sup_{s \in [0, T]} \| y^N_{0,x_N}(s; u) \hspace{-.3ex}- \hspace{-.3ex}y_{0,x}(s;u)\|_{\cH} \\
& \hspace{-5ex}=  \hspace{-2.3ex}\sup_{u\in\mathcal{U}_{ad}[0,T]} \sup_{s \in [0, T]} \| y_N(s;x_N, u) \hspace{-.3ex}-\hspace{-.3ex} y(s;x,u)\|_{\cH}.
\eea
The desired estimate \eqref{uniform_conv_Goal_t_v2} follows then from \eqref{uniform_in_u_conv_Goal}, noting that $\mathcal{U}_{ad}[0,T]=\mathcal{U}_{ad}$.
\ep

\subsection{Galerkin approximations of optimal control and value functions: Convergence results}\label{Sec_cve}

We assume in this section  that $U$ is a compact and convex subset of  the Hilbert space
$V$. In particular this ensures that $\mathcal{U}_{ad}$ defined in \eqref{Eq_U_bounded} is a bounded, closed and convex set. 
For such an admissible set of controls, conditions of existence to optimal control problems associated with the IVPs \eqref{ODE} and \eqref{ODE_Galerkin} are recalled in Appendix \ref{Sec_existence_opt_contr}.

We introduce next the cost functional, $J\colon \cH \times \mathcal{U}_{ad}  \rightarrow \mathbb{R}^+$, associated with the IVP \eqref{ODE}:
\be  \label{J_sec3}
J(x,u) := \int_0^T [\mathcal{G}(y(s; x, u)) + \mathcal{E}(u(s)) ] \, \d s,  \; x \in \cH,
\ee 
where $\mathcal{G}: \cH \rightarrow \mathbb{R}^+$ and $\mathcal{E}: V \rightarrow \mathbb{R}^+$ are assumed to be {\it continuous}, and $\mathcal{G}$ is assumed to satisfy furthermore the condition:
\be\label{C1}\tag{{\bf C1}}
\mathcal{G}  \mbox{ is  locally Lipschitz in the sense of } \eqref{Local_Lip_cond}.
\ee

The associated optimal control problem then writes 
\be  \label{P_sec3}  \tag {$\mathcal{P}$}
\begin{aligned}
\hspace{-.5ex}\min \, J(x,u)  \hspace{1ex}\text{ s.t. } \hspace{1ex} (y, u) \in L^2(0,T; \cH) \times  \mathcal{U}_{ad} \text{ solves} ~\eqref{ODE} \text{ with }  \; y(0)  = x \in \cH.
\end{aligned}
\ee

The cost functional, $J_N\colon \cH_N \times \mathcal{U}_{ad}  \rightarrow \mathbb{R}^+$, associated with the Galerkin approximation  \eqref{ODE_Galerkin} is given by 
\be  \label{J_Galerkin}
J_N(\Pi_N x,u) := \int_0^T [\mathcal{G}(y_N(s; \Pi_N x, u)) + \mathcal{E}(u(s)) ] \, \d s,  \; x \in \cH,
\ee 
and the corresponding optimal control problem reads: 
\be  \label{P_Galerkin}  \tag {$\mathcal{P}_N$}
\begin{aligned}
& \min \, J_N(\Pi_N x,u)  \quad \text{ s.t. } \quad (y_N, u) \in L^2(0,T; \cH_N) \times  \mathcal{U}_{ad} \text{ solves} ~\eqref{ODE_Galerkin}\\
& \hspace{15em} \text{ with }  \; y_N(0)  = \Pi_N x \in \cH_N.
\end{aligned}
\ee

We assume hereafter that both problems, \eqref{P_sec3} and  \eqref{P_Galerkin}, possess each a solution.
We analyze the convergence of the corresponding value functions by adopting a dynamic programming approach. 
For that purpose, we consider for each $(t,x)$  in $[0,T) \times \cH$ a family of optimal control problems associated with \eqref{ODE_t_sec3} and the following cost functional $J_{t,x}$:
\be  \label{J_tx}
J_{t,x}(u) := \int_t^T [\mathcal{G}(y_{t,x}(s;u)) + \mathcal{E}(u(s)) ] \d s, \qquad t \in [0,T), \; u \in \mathcal{U}_{ad}[t,T].
\ee

The cost functional associated with the corresponding Galerkin approximation \eqref{ODE_Galerkin_t} is given by
\be\label{JN_tx}
J^N_{t,x_N}(u) := \int_t^T [\mathcal{G}(y^N_{t,x_N}(s; u)) + \mathcal{E}(u(s)) ] \d s,  \qquad t \in [0,T), \; u \in \mathcal{U}_{ad}[t,T],
\ee
in which we have denoted $\Pi_N x$ by $x_N.$

The value functions corresponding to the optimal control problems associated respectively with \eqref{ODE_t_sec3} and with \eqref{ODE_Galerkin_t}, are then defined as follows:
\begin{subequations}\label{Eq_val_fcts_sec3}
\begin{align}
& v(t, x) := \inf_{u \in \mathcal{U}_{ad}[t,T]} J_{t,x}(u),  \; \; \forall \; (t,x) \in [0,T) \times \cH \; \;  \text{ and } \;\;  v(T, x) = 0, \label{subEq1}\\
& v_N(t, x_N) := \inf_{u \in \mathcal{U}_{ad}[t,T]} J^N_{t,x_N}(u),  \;\;  \forall \; (t,x_N) \in [0,T)\times \cH_N  \;\;   \text{ and }  \;  v_N(T, x_N) = 0. \label{subEq2}
\end{align}
\end{subequations}

We have then the following result.
\bt \label{Thm_cve_Galerkin_val}
Assume that the conditions in Theorem~\ref{Lem:uniform_in_u_conv} together with \eqref{C1} hold. Furthermore, let there exists for each pair $(t,x)$ a  minimizer $u_{t,x}^*$ (resp.~$u_{t,x}^{N,*}$) in $\mathcal{U}_{ad}[t,T]$ of the minimization problem in \eqref{subEq1} (resp.~in \eqref{subEq2}).

 Then for any $x$ in $\cH$, it holds that
\be \label{value_est_goal}
\lim_{N \rightarrow \infty} \sup_{t \in [0, T]} |v_N(t,\Pi_N x) - v(t,x)| = 0.
\ee

\et

\bp

By the definition of the value functions in \eqref{Eq_val_fcts_sec3}, we have
\be \label{value_est_1}
v(t, x) = J_{t,x}(u_{t,x}^*) \le  J_{t,x}(u_{t,x}^{N,*}), 
\ee
and
\be
v_N(t, x_N) = J^N_{t,x_N}(u_{t,x}^{N,*}), 
\ee
with $x_N :=\Pi_N x$. 

The inequality \eqref{value_est_1} and the definition of $J$ give then
\be
v(t, x) \le \int_t^T [\mathcal{G}(y_{t,x}(s;u_{t,x}^{N,*})) + \mathcal{E}(u_{t,x}^{N,*}(s)) ] \d s.
\ee

By subtracting $J^N_{t,x_N}(u_{t,x}^{N,*})$ on both sides of the above inequality, we get
\bea \label{value_est_3}
\hspace{-1.8ex}v(t, x)\hspace{-.3ex} - \hspace{-.3ex} v_N(t, x_N)  & \hspace{-.3ex}\le \hspace{-.3ex}\int_t^T \hspace{-.5ex} [\mathcal{G}(y_{t,x}(s;u_{t,x}^{N,*})) \\
&\hspace{1em}+ \mathcal{E}(u_{t,x}^{N,*}(s)) ] \d s - \hspace{-.5ex} \int_t^T \hspace{-.5ex} [\mathcal{G}(y^N_{t,x_N}(s; u_{t,x}^{N,*})) + \mathcal{E}(u_{t,x}^{N,*}(s)) ] \d s \\
& = \int_t^T \hspace{-.5ex} [\mathcal{G}(y_{t,x}(s;u_{t,x}^{N,*})) - \mathcal{G}(y^N_{t,x_N}(s; u_{t,x}^{N,*}))] \d s.
\eea
Besides, since both $L$ and $F$ are time-independent, it follows from {\bf (A6)} that there exists a positive constant $\mathcal{C}$ such that
\bea
& \|y_{t,x}(s; u_{t,x}^{N,*})\| \le \mathcal{C},  && \Forall t \in [0, T),\; s \in [t, T], \; n \in \mathbb{Z}_+^\ast,\\
& \|y^N_{t,x_N}(s; u_{t,x}^{N,*})\| \le \mathcal{C}, && \Forall t \in [0, T), \; s \in [t, T], \; n \in \mathbb{Z}_+^\ast.
\eea
Now by denoting by $\mathfrak{B}$ the ball in $\cH$ with radius $\mathcal{C}$ centered at the origin, we have
\bea  \label{value_est_4}
 & \int_t^T [\mathcal{G}(y_{t,x}(s;u_{t,x}^{N,*})) - \mathcal{G}(y^N_{t,x_N}(s; u_{t,x}^{N,*}))] \d s \\
  & \le   \Lip(\mathcal{G}\vert_{\mathfrak{B}}) \int_t^T \|y_{t,x}(s;u_{t,x}^{N,*}) - y^N_{t,x_N}(s; u_{t,x}^{N,*})\|_{\cH} \d s \\
& \le   T \Lip(\mathcal{G}\vert_{\mathfrak{B}}) \sup_{s\in[t, T]}\|y_{t,x}(s;u_{t,x}^{N,*}) - y^N_{t,x_N}(s;u_{t,x}^{N,*})\|_{\cH},
\eea
which together with \eqref{value_est_3} leads to
\be  \label{value_est_5}
v(t, x) - v_N(t, x_N)  \le T \Lip(\mathcal{G}\vert_{\mathfrak{B}})\sup_{s\in[t, T]}\|y_{t,x}(s; u_{t,x}^{N,*}) - y^N_{t,x_N}(s; u_{t,x}^{N,*})\|_{\cH}.
\ee
Similarly, we have
\be \label{value_est_6}
v_N(t, x_N)  - v(t, x)  \le T  \Lip(\mathcal{G}\vert_{\mathfrak{B}})  \sup_{s\in[t, T]} \|y_{t,x}(s;u_{t,x}^*) - y^N_{t,x_N}(s; u_{t,x}^*)\|_{\cH}.
\ee
The convergence result \eqref{value_est_goal} follows then from \eqref{value_est_5}, \eqref{value_est_6}, and Corollary~\ref{Lem:uniform_conv_locally_Lip_t_v2}.
\ep

\br \label{Rmk:time-dependent-F}
If the nonlinearity $F$ in the IVP \eqref{ODE} depends also on time, by modifying Assumption {\bf (A3)} accordingly, all the results of Section~\ref{Sect_Galerkin} still hold literally except that of Lemma~\ref{Lem:highmode_F_est} that needs to be amended. For instance, by replacing {\bf (A3)} by the following assumption 
\bi
\item[{\bf (A3$'$)}] The nonlinearity $F: [0,T] \times \cH \rightarrow \cH$ satisfies that $F(\cdot,y) \in L^\infty(0,T; \cH)$ for every $y$ in $\cH$,  $F(t,\cdot)$ is locally Lipschitz for almost every $t$ in $[0, T]$, and for any given bounded set $\mathfrak{B} \subset \cH$, the mapping $t \mapsto \Lip(F(t, \cdot)\vert_\mathfrak{B})$ is in $L^\infty(0,T)$, where $\Lip(F(t, \cdot)\vert_\mathfrak{B})$ denotes the Lipschitz constant of $F(t,\cdot)$ on the set $\mathfrak{B}$. 
\ei

Under this new assumption, by replacing in Lemma~\ref{Lem:highmode_F_est} the supremum for $t$ over $[0,T]$ in \eqref{Eq_highmode_F_est} with the essential supremum,  we have
\be 
\lim_{N\rightarrow \infty} \sup_{u\in \mathcal{U}_{ad}}  \mathop{\mathrm{ess}\, \sup}_{t\in[0,T]} \|(\mathrm{Id}_{\cH} - \Pi_N) F(t, y(t;x,u))\|_{\cH} =0.
\ee
\er

\subsection{Galerkin approximations of optimal control and value functions: Error estimates}\label{Sec_Err_estimates}
We provide in this section some simple and useful error estimates in terms of their interpretations. 

For that purpose, we assume throughout this subsection the following set of assumptions collected as follows 
\bi

\item[{\bf(E)}]  
\bi

\item The linear operator $L: D(L) \subset \cH \rightarrow \cH$ is self-adjoint. 
\item The Galerkin approximations \eqref{ODE_Galerkin} are constructed based on the eigen-subspaces $\cH_{N}  := \mathrm{span}\{e_k \;:\; k = 1, \cdots, N\}, \; N\in \mathbb{Z}_+^\ast$,  where the $e_k$'s are the eigenfunctions of $L$.

\item Assumption {\bf (A6)}. 
\item  Assumption \eqref{C1}.
\ei

\ei
We take the set of admissible controls, $\mathcal{U}_{ad}$, to be given by \eqref{Eq_U_bounded}. But in contrast to Sect.~\ref{Sec_cve}, the set $U$ in the definition of $\mathcal{U}_{ad}$ is not assumed to be compact in $V$.

Hereafter within this subsection, $\mathfrak{B}$ denotes the ball in $\cH$ centered at the origin with radius $\mathcal{C}$, where  $\mathcal{C}$ is the same as given in Assumption {\bf (A6)}. We start with a basic pointwise estimate between  the cost functional $J_{t, x}$  given by \eqref{J_tx}  and its approximation $J_{t,x_{N}}^N$ given by \eqref{JN_tx}.

\bl \label{lem:J_estimates}
Under the set of assumptions given by {\bf (E)}, for any  $(t,x) \in [0,T)\times \mathcal{H}$, and $u\in \mathcal{U}_{ad}[t,T]$, there exists $\gamma >0$, independent of $N$, such that for all $u$ in $ \mathcal{U}_{ad}[t,T],$
\be  \label{Eq_J_est}      
|J_{t, x}(u) -  J_{t,x_{N}}^N(u)|  \le \Lip(\mathcal{G} \vert_{\mathfrak{B}}) \left[\sqrt{T-t} + \gamma (T-t) \right] \| \Pi_N^\perp y_{t,x}(\cdot;u)\|_{L^2(t,T; \cH)}.
\ee
\el

\bp

Note that by the definitions of $J_{t, x}$ and $J_{t,x_{N}}^N$ and the locally Lipschitz condition \eqref{C1} on $\mathcal{G}$, we have
\be
|J_{t, x}(u) -  J_{t,x_{N}}^N(u)| \le \Lip(\mathcal{G}\vert_{\mathfrak{B}}) \int_{t}^T \|y_{t,x}(s;u) - y^N_{t,x_N}(s;u)\|_{\cH} \d s, 
\ee
where $\mathfrak{B} \subset \cH$ denotes a ball centered at the origin that contains $y_{t,x}(s;u)$ and $y^N_{t,x_N}(s;u)$ for all $s$ in $[t,T]$. By rewriting $y_{t,x}(s;u)$ as $\Pi_{N}y_{t,x}(s;u) + \Pi_{N}^\perp y_{t,x}(s;u)$, we have thus 
\bea \label{J_est1}
&\hspace{-2ex}|J_{t, x}(u) -  J_{t,x_{N}}^N(u)| \\
&\hspace{2em}\le \Lip(\mathcal{G}\vert_{\mathfrak{B}}) \int_{t}^T \hspace{-1ex}  \Big( \| \Pi_{N}y_{t,x}(s;u) - y^N_{t,x_N}(s;u)\|_{\cH} +  \| \Pi_{N}^\perp y_{t,x}(s;u) \|_{\cH}  \Big) \d s.
\eea

The estimate \eqref{Eq_J_est} follows then immediately if one proves that there exists $\gamma^2 > 0$ independent of $N$ such that for any  $(t,x) \in [0,T)\times \mathcal{H}$, and $u\in \mathcal{U}_{ad}[t,T]$,
\be  \label{low_mode_est}
\|\Pi_{N}y_{t,x}(s;u) -y^N_{t,x_{N}}(s;u) \|^2_{\cH} \le  \gamma^2 \int_t^s \| \Pi_N^\perp y_{t,x}(s';u) \|^2_{\cH} \, \d s', \qquad  s \in [t, T].
\ee

Indeed, in such a case we get 
\bea
 \hspace{-3ex}\int_{t}^T  \hspace{-.75ex} \| \Pi_{N}y_{t,x}(s;u) - y^N_{t,x_N}(s;u)\|_{\cH}  \d s & \le \gamma  \int_{t}^T   \hspace{-1.3ex}\Big(\int_t^s \| \Pi_N^\perp y_{t,x}(s';u) \|^2_{\cH} \, \d s' \Big)^{\frac{1}{2}}\d s \\
& \le \gamma  (T-t) \| \Pi_N^\perp y_{t,x}(\cdot;u)\|_{L^2(t,T; \cH)},
\eea
and by noting from  H\"older's inequality that 
\bes
 \int_{t}^T   \| \Pi_{N}^\perp y_{t,x}(s;u) \|_{\cH}  \d s \le \sqrt{T-t}  \| \Pi_N^\perp y_{t,x}(\cdot;u)\|_{L^2(t,T; \cH)},
\ees
we arrive at \eqref{Eq_J_est}.

We are thus left with the proof of  \eqref{low_mode_est} which is easily derived as follows. Let us introduce 
\be\label{defw}
w(s):= \Pi_{N}y_{t,x}(s;u) -y^N_{t,x_{N}}(s;u). 
\ee 
By applying $\Pi_N$ to both sides of Eq.~\eqref{ODE_t_sec3}, we obtain that $\Pi_{N}y_{t,x}(\cdot; u)$ satisfies the following IVP:
\beas
\frac{\d \Pi_{N}y}{\d s} &= L_N \Pi_N y + \Pi_{N}F (\Pi_N y + \Pi_{N}^\perp y_{t,x}(s;u)) +  \Pi_{N}\mathfrak{C} (u(s)),  \quad s \in (t,T], \\
\Pi_N y(t)  &= \Pi_{N} x \in \cH_N.
\eeas
This together with \eqref{ODE_Galerkin_t} implies that $w$ satisfies the following problem:
\bea \label{eq:w}
\frac{\d w}{\d s} &= L_{N} w + \Pi_{N}  F(\Pi_N y + \Pi_{N}^\perp y_{t,x}(s;u)) -  \Pi_{N}F(y_N),  \quad s \in (t,T], \\
w(t)  &= 0.
\eea

By taking the $\cH$-inner product on both sides of \eqref{eq:w} with $w$, we obtain:
\be  \label{energy est:1}
\frac{1}{2}\frac{\d \|w\|^2_{\cH}}{\d s} = \langle L_{N} w, w \rangle + \langle \Pi_{N} \bigl( F(\Pi_N y + \Pi_{N}^\perp y_{t,x}(s;u)) -  F(y_N)\bigr), w \rangle.
\ee

The local Lipschitz property of $F$ implies then that 
\bea  \label{RHS_control}
\langle \Pi_{N} \bigl( F(\Pi_N y &+ \Pi_{N}^\perp y_{t,x}(s;u)) -  F(y_N)\bigr), w \rangle\\
 & \le  \mathrm{Lip}(F|_{\mathfrak{B}}) \, \| \Pi_N y + \Pi_{N}^\perp y_{t,x}(s;u) - y_N \|_{\cH} \, \|w\|_{\cH} \\
 &\le  \mathrm{Lip}(F|_{\mathfrak{B}}) \, (\| w\| + \| \Pi_{N}^\perp y \|_{\cH}) \, \|w\|_{\cH} \\
 &\le  \mathrm{Lip}(F|_{\mathfrak{B}}) \Big(\frac{3}{2} \| w\|_{\cH}^2  + \frac{1}{2} \| \Pi_{N}^\perp y  \|_{\cH}^2\Big). 
\eea

Since $L$ is self-adjoint, we have 
\bea \label{RHS_control:3}
\langle L_{N} w(s), w(s) \rangle = \sum_{i=1}^N  \beta_i \|w_i(s)\|_{\cH}^2\le \beta_1 \|w(s)\|_{\cH}^2,
\eea
where $\beta_i$ is the eigenvalue associated with its  $i^{\mathrm{th}}$ eigenmode $e_i$.

Using \eqref{RHS_control} and \eqref{RHS_control:3} in \eqref{energy est:1}, we finally arrive at 
\be\label{Eq_interm1}
\frac{1}{2}\frac{\d \|w(s)\|_{\cH}^2}{\d s} \le \left(\beta_1 + \frac{3}{2} \mathrm{Lip}(F|_{\mathfrak{B}})   \right) \|w(s)\|_{\cH}^2 + \frac{1}{2} \mathrm{Lip}(F|_{\mathfrak{B}})  \| \Pi_N^\perp y_{t,x}(s;u)\|_{\cH}^2,
\ee
which for all $s$ in $[t, T]$, by a standard application of Gronwall's inequality, leads to 
\bea \label{y-z est}
&\hspace{-1em} \|\Pi_{N}y_{t,x}(s;u) -y^N_{t,x_{N}}(s;u)\|_{\cH}^2  = \|w(s)\|_{\cH}^2   \\
&  \hspace{7em}\le \mathrm{Lip}(F|_{\mathfrak{B}}) \int_t^s e^{2[\beta_1 + \frac{3}{2}\mathrm{Lip}(F|_{\mathfrak{B}})](s-s')}  \| \Pi_N^\perp y_{t,x}(s';u)\|_{\cH}^2 \d s' \\
&  \hspace{7em}\le e^{2[\beta_1 + \frac{3}{2}\mathrm{Lip}(F|_{\mathfrak{B}})]T} \mathrm{Lip}(F|_{\mathfrak{B}}) \int_t^s \| \Pi_N^\perp y_{t,x}(s';u)\|_{\cH}^2 \d s',
\eea
taking into account that $w(t)= 0$ due to \eqref{defw}. The estimate \eqref{low_mode_est} is thus verified, and the proof is complete.  
\ep

We have then
\bt\label{Thm_PM_val}
Assume the set of assumptions given by {\bf (E)}. Assume also that for each $(t,x) \in [0,T)\times \cH$, there exists a minimizer $u^*_{t,x}$ (resp.~$u^{N,\ast}_{t,x_{N}}$) for the value function $v$ (resp.~$v_N$) defined in \eqref{Eq_val_fcts_sec3}.

Then for any $(t,x) \in [0,T)\times \cH$ it holds that
\bea\label{PM_value_est_goal}
&  |v(t,x) - v_N(t,x_{N})| \\
 & \le \Lip(\mathcal{G}\vert_{\mathfrak{B}}) \left[\sqrt{T-t} + \gamma (T-t) \right] \Bigl( \|\Pi_N^\perp y_{t,x}(\cdot; u^\ast_{t,x})\|_{L^2(t,T; \cH)} \\
 &\hspace{17em}+ \| \Pi_N^\perp y_{t,x}(\cdot; u^{N,\ast}_{t,x_{N}})\|_{L^2(t,T; \cH)} \Bigr),
\eea
where the constant $\gamma$ is the same as in Lemma~\ref{lem:J_estimates}.
 
\et

\bp

The result is a direct consequence of Lemma~\ref{lem:J_estimates}. Indeed, since
\bes 
v(t,x)  = J_{t,x}(u^*_{t,x}) \le J_{t,x}(u^{N,\ast}_{t,x_{N}}) \quad \text{and} \quad  v_N(t,x_{N}) = J^N_{t,x_{N}}(u^{N,\ast}_{t,x_{N}}), 
\ees
we get
\bes 
 v(t,x) - v_N(t,x_{N})  \le J_{t,x}(u^{N,\ast}_{t,x_{N}}) - J^N_{t,x_{N}}(u^{N,\ast}_{t,x_{N}}). 
 \ees
It follows then from \eqref{Eq_J_est} that
\bea \label{PM_val_est1}
 \hspace{-1em}v(t,x) - v_N(t,x_{N})\hspace{-.3ex} & \hspace{-.5ex}\le \hspace{-.5ex} \Lip(\mathcal{G} \vert_{\mathfrak{B}}) \left[{\sqrt{T-t}}+ \gamma (T-t) \right] 
 \| \Pi_N^\perp y_{t,x}(\cdot;u^{N,\ast}_{t,x_{N}})\|_{L^2(t,T; \cH)}.
\eea

Similarly, 
\bea \label{PM_val_est2}
\hspace{-1em} v_N(t,x_{N})  -  v(t,x) &\hspace{-.5ex} \le \hspace{-.5ex} \Lip(\mathcal{G}\vert_{\mathfrak{B}}) \left[{\sqrt{T-t}}+ \gamma (T-t) \right] 
 \|\Pi_N^\perp y_{t,x}(\cdot; u^\ast_{t,x})\|_{L^2(t,T; \cH)}.
\eea
The estimate \eqref{PM_value_est_goal} results then from \eqref{PM_val_est1} and \eqref{PM_val_est2}.
\ep

\br \label{Rmk_motivation_A7}
Note that in the RHS of \eqref{PM_value_est_goal},  it is not clear a priori that 
\be \label{unif_vanishing}
\lim_{N\rightarrow \infty} \|\Pi_N^\perp y_{t,x}(\cdot; u^{N,\ast}_{t,x_{N}})\|_{L^2(t,T; \cH)} = 0. 
\ee  
The reason relies on the dependence on $u^{N,\ast}_{t,x_{N}}$ of $\| \Pi_N^\perp y_{t,x}(\cdot; u^{N,\ast}_{t,x_{N}})\|_{L^2(t,T; \cH)}$, where $u^{N,\ast}_{t,x_{N}}$ denotes the control synthesized from the $N$-dimensional Galerkin approximation.

Estimate  \eqref{PM_value_est_goal} provides thus another perspective regarding the usage of Assumption {\bf (A7)} to ensure the convergence of the value functions associated with the Galerkin approximations presented in Sect.~\ref{Sec_cve}. Note that Assumption {\bf (A7)} is just a slightly strengthened form of \eqref{unif_vanishing}. See Sect.~\ref{Sect_examples} for a broad class of IVPs for which Assumption {\bf (A7)} is satisfied. 

\er

\bc\label{Lem_controller_est}
Assume that the conditions given in Theorem~\ref{Thm_PM_val} hold.
Let us also denote $u^* := u^*_{0,x}$ and $u^{*}_{N}:= u^{N,*}_{0,x_N}$. Assume furthermore that there exists $\sigma >0$ such that the following local growth condition is satisfied  for the cost functional $J$ defined in \eqref{J_sec3}: 
\be\label{Eq_growth_onJ}
\sigma \|u^*  - v\|_{L^q(0,T; V)}^q \le J(x, v) - J(x, u^*), 
\ee
for all $v$ in some neighborhood $\mathcal{W} \subset \mathcal{U}_{ad}$ of $u^*$, with $\mathcal{U}_{ad}$ given by \eqref{Eq_U_bounded}. Assume finally that $u^*_N$  lies in  $\mathcal{W}$. 
Then, 
\bea\label{Est_contr_diff}
\|u^\ast - u^\ast_{N}\|_{L^q(0,T; V)}^q & \le \frac{1}{\sigma}\Lip(\mathcal{G}\vert_{\mathfrak{B}}) \left[\sqrt{T} + \gamma T  \right] \Bigl( \| \Pi_N^\perp y(\cdot;u^*)\|_{L^2(0,T; \cH)}\\ 
& \hspace{10em}+  2 \|  \Pi_N^\perp y (\cdot; u^{*}_{N})\|_{L^2(0,T; \cH)} \Bigr), 
\eea
where the constant $\gamma$ is the same as in Lemma~\ref{lem:J_estimates}.
\ec

\bp
By the assumptions, we have
\be
\|u^*  - u^*_N\|_{L^q(0,T; V)}^q \le \frac{1}{\sigma} \left(J(x, u^*_N) - J(x, u^*)\right).
\ee 
Note also that
\bea
J(x, u^*_N) - J(x, u^*) &= J(x, u^*_N) - J_N(x_{N},u^*_N) + J_N(x_{N}, u^*_N) - J(x, u^*) \\
& =  J_{0,x}(u^*_N) - J^N_{0,x_{N}}(u^*_N)+ v_N(0,x_{N}) - v(0,x),
\eea
where we used the fact that 
\bes
 J(x, u^*_N) = J_{0,x}(u^*_N),  \quad J_N(x_{N},u^*_N) = J^N_{0,x_{N}}(u^*_N),  
 \ees 
 and 
\bes
J_N(x_{N},u^*_N)  = v_N(0,x_{N}),   \quad J(x, u^*) = v(0,x).
\ees
The result follows by applying the estimate \eqref{Eq_J_est} to $J_{0,x}(u^*_N) - J^N_{0,x_{N}}(u^*_N)$ and the estimate \eqref{PM_value_est_goal} to $v_N(0,x_{N}) - v(0,x)$.
\ep
\br
Note that \eqref{Eq_growth_onJ} ensures uniqueness of the local minimizer $u^\ast$ in $\mathcal{W}$. 
\er
\subsection{Examples that satisfy Assumption {\bf (A7)}} \label{Sect_examples}
We consider in this subsection a special but important case of Galerkin approximations \eqref{ODE_Galerkin} built from the 
eigenfunctions\footnote{i.e.~with $\mathcal{H}_N:=\mbox{span}\{e_1, \cdots,e_{N}\}$ and $L_N:= \Pi_N L \Pi_N$, for which Assumptions {\bf (A0)}-{\bf (A2)} are satisfied.} $\{e_k\}_{k\geq 1}$ of $L$, and for which we assume the following properties:
\bi

\item[(i)]  The set of admissible controls $\mathcal{U}_{ad}$ is given as in \eqref{Eq_U_bounded} with $q>1$. 

\item[(ii)]  The linear operator $L: D(L) \subset \cH \rightarrow \cH$  is self-adjoint with compact resolvent and satisfies Assumption {\bf (A0)}.

\item[(iii)] The mapping $F: \cH \rightarrow \cH$ and  $\mathfrak{C}: V \rightarrow \cH$ are locally Lipschitz, and $\mathfrak{C}(0) = 0$.

\item[(iv)]  Assumption {\bf (A6)} is satisfied. 

\ei

We have then the following useful Lemma for applications; see Sect.~\ref{Sec_appl}. 
\bl\label{Lem_examples}
The convergence property \eqref{Est_uniformity_highmodes} in  {\bf (A7)} holds under assumptions (i)-(iv) above.
\el

\bp
Since $L$ is assumed to be self-adjoint with compact resolvent, it follows from spectral theory of self-adjoint compact operator \cite[Thm.~6.8, Prop.~6.9, and Thm.~6.11]{Brezis10} that the eigenfunctions of $L$ form an orthonormal basis of $\cH$, and the eigenvalues $\{\beta_k\}_{k\geq 1}$ of $L$ approach either $\infty$ or $-\infty$ as $k$ approaches $\infty$. Since $L$ is also assumed to be the infinitesimal generator of a $C_0$-semigroup (Assumption {\bf(A0)}), $\beta_k$ is bounded above \cite[Thm.~5.3]{Pazy83}. It follows then that
\be
\beta_N \rightarrow -\infty \quad \text{ as } \quad  N \rightarrow \infty.
\ee

As before $\Pi_N: \cH \rightarrow \cH_N$ denotes the orthogonal projector associated with $\cH_N$ spanned by the first $N$ eigenfunctions. Let us recall also that $\Pi_N^\perp$ denotes the orthogonal projector associated with the orthogonal complement of $\cH_N$ in $\cH$, namely
\bes
\Pi_N^\perp := \mathrm{Id} - \Pi_N. 
\ees
Now, by applying $\Pi_N^\perp$ to both sides of \eqref{Eq_mild} and introducing the notation, 
\bes
y_{N>} := \Pi_N^\perp y, 
\ees
we get, for all $t$ in   $[0, T]$,
\be
y_{N>}(t) = T(t) y_{N>}(0) + \int_{0}^t T(t-s) \Pi_N^\perp \big( F(y(s;x,u)) +   \mathfrak{C} (u(s)) \big) \d s,
\ee
where we also used the fact that $\Pi_N^\perp$ commutes with the semigroup $\{T(t)\}_{t \ge 0}$. 

Let $\mathcal{C}$ be the constant arising in the upper bounds of \eqref{Eq_y_uniform-in-u_bounds} in Assumption {\bf (A6)}, and denote by $\mathfrak{B}_{\cH}$ the closed ball in $\cH$ centered at the origin with radius $\mathcal{C}$. Let also $\mathfrak{B}_{\scriptscriptstyle V}$ be the smallest closed ball in $V$ containing the compact set $U$ given in \eqref{Eq_U_bounded}. We get then for $t\in [0, T]$:
\bea \label{wN_bound}
|y_{N>}(t)| & \le e^{\beta_{N+1} t} |y_{N>}(0)| + \Lip(F\vert_{\mathfrak{B}_{\cH}}) (\mathcal{C} + \|F(0)\|_{\cH}) \int_{0}^t e^{\beta_{N+1} (t-s)}  \d s
\\
&\hspace{10em}+ \Lip(\mathfrak{C}\vert_{\mathfrak{B}_V}) \int_{0}^t e^{\beta_{N+1} (t-s)} \|u(s)\|_V  \d s \\
& \le e^{\beta_{N+1} t} |y_{N>}(0)| +  \frac{1}{|\beta_{N+1}|} \Lip(F\vert_{\mathfrak{B}_{\cH}}) (\mathcal{C} + \|F(0)\|_{\cH}) \\
& \hspace{10em}+ \frac{1}{(|\beta_{N+1} |q')^{\frac{1}{q'}}} \Lip(\mathfrak{C}\vert_{\mathfrak{B}_V})  \|u\|_{L^{q}([0,T; V])},
\eea
where we have used the H\"older inequality with $q' = q/(q-1)$ to estimate the term $\int_{0}^t e^{\beta_{N+1} (t-s)} \|u(s)\|_V  \d s$.

Now, let $C_U:= \sup_{w \in U} \|w\|_V$.  We have 
\be
\|u\|_{L^{q}(0,T; V)} \le C_U T^{\frac{1}{q}}, \qquad \forall u \in \mathcal{U}_{ad}.
\ee
Using the above bound in \eqref{wN_bound}, we obtain 
\bea \label{wN_bound2}
|y_{N>}(t)| \le e^{\beta_{N+1} t} |y_{N>}(0)| & + \frac{1}{|\beta_{N+1}|} \Lip(F\vert_{\mathfrak{B}_{\cH}}) (\mathcal{C} + \|F(0)\|_{\cH}) \\
& \hspace{-8ex}+ \frac{1}{(|\beta_{N+1} |q')^{\frac{1}{q'}}} C_U T^{\frac{1}{q}} \Lip(\mathfrak{C}\vert_{\mathfrak{B}_V}), \;\; \forall t \in [0, T], \; u \in \mathcal{U}_{ad}. 
\eea

Since $\beta_N$ approaches $-\infty$ as $N$ approaches $\infty$ by assumption, we get that for each $\epsilon >0$, $T>0$ and $x$ in $\cH$, there exists a positive integer $N_0$  such that
\be \label{Eq_unif_est_ys_ex}
\sup_{u\in \mathcal{U}_{ad}} \; \sup_{0\leq t\leq T} \|(\mbox{Id}_{\cH} - \Pi_{N}) y(t;x,u)\|_{\cH} \le \epsilon, \quad \Forall N \ge N_0.
\ee
Thus Assumption {\bf (A7)} is satisfied. 
\ep

\br\label{Rmk_other_spectral_assumption}
From the proof given above, it is clear that Assumption (ii) in Lemma~\ref{Lem_examples} can be relaxed to e.g.~
\bi
\item[(ii$'$)] The eigenfunctions of the linear operator $L: D(L) \subset \cH \rightarrow \cH$ forms an orthonormal basis of $\cH$ and the eigenvalues of $L$ approaches $-\infty$.
\ei

Note also that Lemma~\ref{Lem_examples} still holds if $L$ has complex eigenvalues. We just need to work with a complexification of the operator $L$ and the underlying state space $\cH$ (cf.~e.g.~\cite[footnote 17 on p.~55]{CLW15_vol2}), and make the corresponding changes in Assumption (ii$'$) above.
\er

\section{Application to optimal control of energy balance climate models}\label{Sec_appl}
We show in this section that our framework allows us to provide rigorous Galerkin approximations to the optimal control of  broad class of semilinear heat problems, posed on a compact (smooth) manifold without boundary.  As an application, we show in Sect.~\ref{Sec_EBM} that the optimal control of energy balance models (EBMs) arising in the context of geoengineering  and climate change, can be thus approximated by optimal control problems of ODEs, more tractable numerically.  We first recall some fundamentals of differential geometry to prepare the analysis.

\subsection{Preliminary from differential geometry} \label{Sect_diff_geo_prelim}
To properly write an EBM on the sphere, we recall how differential operators are defined on an abstract compact smooth manifold $\mathfrak{M}$ without boundary, of dimension $n$ and endowed with a Remannian metric\footnote{Recall that a Remannian metric $\mathfrak{g}$ is a smooth family of inner products on the tangent spaces $T_p \mathfrak{M}$. Namely, $\mathfrak{g}$ associates (smoothly) to each $p$ in  $\mathfrak{M}$ a positive definite symmetric
bilinear form $\varphi_p$ on $T_p \mathfrak{M}\times T_p \mathfrak{M}$. In local coordinates, 
\bes
\mathfrak{g}_{ij}(p)=\varphi_p(\partial_i,\partial_j),
\ees
and thus $\mbox{det} \;  \mathfrak{g}(p) >0$. 
} $\mathfrak{g}$.
 First, given a smooth function $u$ on $\mathfrak{M}$,  the gradient $\nabla_{\mathfrak{g}} u$ is a vector field on $\mathfrak{M}$,  that takes in local coordinates the form
\be \label{Eq_grad}
\nabla_{\mathfrak{g}} u:= \sum_{i=1}^n\bigg(\sum_{j=1}^n \mathfrak{g}^{ij}\partial_j u \bigg)\; \partial_i.
\ee
The divergence of a vector field $X=\sum_{j=1}^n X^j \partial_j$ takes the following form in local coordinates
\be \label{Eq_div}
\mbox{div}_{\mathfrak{g}}  \; X= \frac{1}{\sqrt{\mbox{det} \; \mathfrak{g}}} \sum_{i=1}^n \partial_i \bigg( X^i \sqrt{\mbox{det}  \; \mathfrak{g}}\bigg).
\ee
The Laplacian on $(\mathfrak{M},\mathfrak{g})$ takes then the form

\bea
\Delta_{\mathfrak{g}}:&= \mbox{div}_{\mathfrak{g}} \circ \nabla_{\mathfrak{g}}\\
&= \frac{1}{\sqrt{\mbox{det} \; \mathfrak{g}}} \sum_{i=1}^n \partial_i\bigg(\sum_{j=1}^n\mathfrak{g}^{ij}\sqrt{\mbox{det} \; \mathfrak{g}} \; \partial_j\bigg).
\eea

Set 
\be
\mathbb{S}^n:=\{x\in \mathbb{R}^{n+1}\;:\;|x|=1\}.
\ee

Let $y = (y_1,\cdots,y_{n+1})$ be any point of $\mathbb{S}^n$ and $(x_1,\cdots,x_n)$ be its image under the
stereographic projection from the ``north pole'' $N = (0, . . . , 0, 1)$ onto the space 
\be
\mathbb{R}^n \equiv \{(\xi_1,\cdots,\xi_{n+1}) \in \mathbb{R}^{n+1} \; \vert  \; \xi_{n+1}=0\}.
\ee

The canonical Riemannian metric on $\mathbb{S}^n$ takes then the form
\be \label{Def_gij}
\mathfrak{g}_{ij}=\frac{4}{(1+|x|^2)^2} \delta_{ij}, \; \; 1\leq i,j \leq n.
\ee
Note that $\mathfrak{g}^{ij}$ introduced above has here the explicit expression: 
\be
\mathfrak{g}^{ij}=(\mathfrak{g}_{ii})^{-1}\delta_{ij}, \; \; 1\leq i,j \leq n.
\ee
In what follows we denote by $\langle \cdot , \cdot \rangle_{\mathfrak{g}}$ the inner Riemannian product on $(\mathbb{S}^n,\mathfrak{g})$ and by $L^2(\mathbb{S}^n)$ the space of square-integrable real-valued functions for the norm induced by this inner product. The space $\vec{L}^2(\mathbb{S}^n)$ is then defined as 
\bes
\vec{L}^2(\mathbb{S}^n):=\underbrace{L^2(\mathbb{S}^n)\times \cdots \times L^2(\mathbb{S}^n)}_n.
\ees

Introduce now the polar coordinates on $\mathbb{S}^n$. Consider the ``south pole'' $S = (0, . . . , 0,-1)$. For any $y\in \mathbb{S}^n \backslash \{N, S\}$, define $\rho \in (0,\pi)$ and $\theta \in \mathbb{S}^{n-1}$ by
\be
\cos \rho =y^{n+1}, \;\;\; \theta=\frac{y'}{|y'|},
\ee 
where $y'=(y_1,y_2,\cdots,y_n,0)$.

These variables have the following interpretation: the polar radius $\rho$ represents the angle between the position vectors of $y$ and $N$; it can be also regarded as the latitude of the point $y$ measured from $N$. The polar angle $\theta$ can be regarded as the longitude of the point $y$.

The canonical spherical Riemannian metric has the following expression in the polar coordinates:
\be
\d s^2 =\sum_{i,j=1}^n \mathfrak{g}_{ij}\; \d x_i \d x_j =\d \rho^2  + (\sin \rho)^2 \d \theta^2.
\ee

The operator $\Delta_{\mathfrak{g}}$ then takes the form
\be
\Delta_{\mathfrak{g}}=\frac{\partial}{\partial \rho^2} +(n-1) \cot \rho  \frac{\partial}{\partial \rho} +\frac{1}{(\sin \rho)^2 } \Delta_{\theta},
\ee
where $ \Delta_{\theta}$ denotes the Laplace-Beltrami operator on $\mathbb{S}^{n-1}.$

Let us introduce the linear operator 
\bes
\mathcal{L}_\mathfrak{g}:= -\Delta_\mathfrak{g},
\ees
 with domain 
\be\label{dom_L}
\mathcal{D}(\mathcal{L}_\mathfrak{g}):=\{u \in L^2(\mathbb{S}^n) \; : \; \nabla_\mathfrak{g} u \in  \vec{L}^2(\mathbb{S}^n), \; \Delta_\mathfrak{g} u \in L^2(\mathbb{S}^n) \}. 
\ee

It is known that $\mathcal{L}_\mathfrak{g}$ is self-adjoint \cite[Sect.~4.2]{Grigoryan09}. Moreover, the following results hold:

\bt \label{Thm_deltag}
The spectrum of $\mathcal{L}_\mathfrak{g}$ with domain  $\mathcal{D}(\mathcal{L}_\mathfrak{g})$ defined in \eqref{dom_L}  is discrete and consists of an increasing sequence
$\{\lambda_{k}\}_{k=1}^\infty$ of non-negative eigenvalues (counted according to
multiplicity) such that  
\bes
\lim_{k\rightarrow \infty} \lambda_{k}  = +\infty. 
\ees
There is an orthonormal
basis $\{e_{k}\}_{k=1}^\infty$ in $L^2(\mathbb{S}^n)$ such that each function $e_{k}$ is an eigenfunction
of $\mathcal{L}_\mathfrak{g}$ with the eigenvalue $\lambda_k$.
\et

Let $\{E_\lambda \}_{\lambda \ge 0}$ denote the spectral resolution of $\mathcal{L}_\mathfrak{g}$. We can define then for each $t \geq 0$,
\be\label{Def_Pt}
P(t) :=\int_{0}^\infty e^{-t \lambda} \d E_\lambda, \; \; 
\ee
that constitutes a bounded linear operator acting on $L^2(\mathbb{S}^n)$ satisfying the  properties summarized below.

\bt \label{Thm_heat_semigroup}
Let $P(t)$ be the operator defined by \eqref{Def_Pt}, then 
\bi
\item[(i)] For any $t \ge 0$, $P(t)$ is a bounded self-adjoint operator on $L^2(\mathbb{S}^n)$, and 
\be
\|P(t)\|\le 1. 
\ee
\item[(ii)] The family $\{P(t)\}_{t\ge 0}$ satisfies the semigroup identity: 
\be
P(t) P(s) = P(t+s),
\ee
for all $t,s \ge 0$. 

\item[(iii)] The mapping $t \mapsto P(t)$ is strongly continuous on $[0, \infty)$. That is, for any $t\ge 0$ and $f \in L^2(\mathbb{S}^n)$, 
\be
\lim_{s \rightarrow t} P(s) f = P(t) f, 
\ee
where the limit is understood in the norm of $L^2(\mathbb{S}^n)$. In particular, for any $f \in L^2(\mathbb{S}^n)$,
\be
\lim_{t \rightarrow 0^+} P(t) f = f. 
\ee

\item[(iv)] For all $f \in L^2(\mathbb{S}^n)$ and $t > 0$, we have that $P(t) f$  lies in $\mathcal{D}(\mathcal{L}_\mathfrak{g})$ and 
\be
\frac{\d}{\d t} (P(t) f)  = - \mathcal{L}_\mathfrak{g}(P(t) f). 
\ee
\ei

\et

The above theorems are particular cases of results presented in \cite{Grigoryan09} for the Laplace operator defined on general {\it weighted smooth manifolds} \cite[Def.~3.17]{Grigoryan09}. See \cite[Thm.~10.13]{Grigoryan09} for Theorem~\ref{Thm_deltag} and \cite[Thm.~4.9]{Grigoryan09} for Theorem \ref{Thm_heat_semigroup}. 

\subsection{Galerkin approximations of controlled semilinear heat equations on $\mathbb{S}^n$}\label{Sec_optctr_man}
Given $\mathcal{L}_\mathfrak{g}=-\Delta_\mathfrak{g} $ with domain $\mathcal{D}(\mathcal{L}_\mathfrak{g})$ given in \eqref{dom_L}, we consider the following abstract controlled semilinear heat problem of the form  \eqref{ODE} posed in $L^2(\mathbb{S}^n)$:
\bea \label{heat_eq}
\frac{\d y}{\d t} &= - \mathcal{L}_\mathfrak{g} \, y + F(y) + \mathfrak{C} (u(t)),  \quad t \in (0, T], \\
y(0) &= x \; \in L^2(\mathbb{S}^n).
\eea

In what follows we denote by $\cH$ the space $L^2(\mathbb{S}^n)$. Based on Theorem~\ref{Lem:uniform_in_u_conv} of Section~\ref{Sect_Galerkin}, we show the uniform convergence of Galerkin approximations  to \eqref{heat_eq} associated with the reduced state space $\cH_{N} \subset L^2(\mathbb{S}^n)$ defined by 
\be \label{Eq_Hn_heat}
\cH_{N}  := \mathrm{span}\{e_k \;:\; k = 1, \cdots, N\}, \; N\in \mathbb{Z}_+^\ast,
\ee
in which the $e_k$'s denote the eigenfunctions of $-\mathcal{L}_\mathfrak{g}$ lying in $\mathcal{D}(\mathcal{L}_{\mathfrak{g}})$; see Theorem~\ref{Thm_deltag}.

The linear approximations $L_N$ of the operator $-\mathcal{L}_\mathfrak{g}$ are then naturally defined as 
\be
L_N:= \Pi_N \Delta_\mathfrak{g} \Pi_N: \cH \rightarrow \cH_N,
\ee
where $\Pi_N$ denotes the orthogonal projector associated with $\cH_N$. 

Theorem~\ref{Lem:uniform_in_u_conv} leads then to the following corollary about uniform convergence of Galerkin approximations of \eqref{heat_eq}.


\bc  \label{Cor:heat_Galerkin_approx}

Let $V$ be a Hilbert space. Assume that $F: L^2(\mathbb{S}^n) \rightarrow L^2(\mathbb{S}^n)$ 
and  $\mathfrak{C}: V \rightarrow L^2(\mathbb{S}^n)$ are both globally Lipschitz, and $\mathfrak{C}(0) = 0$.  Assume the set of admissible controls $\mathcal{U}_{ad}$ is given by \eqref{Eq_U_bounded} with  
$U$ therein being a compact subset of the Hilbert space~$V$ and with $q > 1$.  

Then, for any $T > 0$ and any $(x,u)$ in $L^2(\mathbb{S}^n) \times \mathcal{U}_{ad}$, the problem \eqref{heat_eq} admits a unique mild solution $y(\cdot; x, u)$ in $C([0,T],\cH)$, and its Galerkin approximation \eqref{ODE_Galerkin} associated with the eigen-subspaces \eqref{Eq_Hn_heat} admits a unique solution $y_N(\cdot; \Pi_N x, u)$ in  $C([0,T],\cH_N)$ for each $N$ in $\mathbb{Z}_+^\ast$. Moreover,  the following uniform convergence result holds:
\be  \label{heat_eqn_uniform_in_u_conv}
\lim_{N\rightarrow \infty}  \sup_{u\in \mathcal{U}_{ad}} \sup_{t \in [0, T]} \|y_N(t; \Pi_N x, u) - y(t; x,u)\|_{\cH} = 0. 
\ee

\ec

\bp

We only need to check the conditions  {\bf (A0)}--{\bf (A2)},  {\bf (A6)}, and {\bf (A7)} assumed in Theorem~\ref{Lem:uniform_in_u_conv}. This is done below in four steps. 

\medskip
{\sc Step 1}: Checking {\bf (A0)}--{\bf (A1)}. From Theorem~\ref{Thm_heat_semigroup}, we know that $-\mathcal{L}_\mathfrak{g}$ is the infinitesimal generator of the $C_0$-semigroup of contractions, $\{P(t)\}_{t\ge 0}$, on $L^2(\mathbb{S}^n)$.

Since $-\mathcal{L}_\mathfrak{g}$ is self-adjoint, the operator $L_N$ is a finite-rank diagonal operator acting on $\cH$ and 
we have for each $\phi$ in $\cH$,
\be
e^{t L_N} \phi := \sum_{k=1}^N e^{-t \lambda_k} \langle \phi,e_k\rangle_{\mathfrak{g}} \; e_k. 
\ee

 It follows that each operator $L_N$ generates a $C_0$-semigroup of contractions on $\cH$, which will be denoted by $\{P_N(t)\}_{t\ge 0}$. 
We have thus checked Assumptions {\bf (A0)}--{\bf (A1)} given in Section~\ref{Sect_Galerkin} with $M=1$ and $\omega =0$, namely
\be \label{Eq_control_linearflow_heat}
\quad \|P(t)\| \le 1 \quad \text{and} \quad \|P_N(t)\| \le 1, \quad N \geq 0, \; \quad t \ge 0.
\ee

\medskip
{\sc Step 2}: Checking {\bf (A2)}. This condition results from the self-adjointness of $-\mathcal{L}_\mathfrak{g}$. Indeed, for any given $\phi$  in $\mathcal{D}(\mathcal{L}_\mathfrak{g})$, since both $\phi$ and $\Delta_\mathfrak{g}\phi$ belong to $L^2(\mathbb{S}^n)$, the following expansions against the eigenbasis hold: 
\be
\phi = \sum_{i=1}^\infty a_i e_i, \quad \Delta_\mathfrak{g} \phi  = \sum_{i=1}^\infty b_i e_i,
\ee
where 
\be
a_i = \langle \phi, e_i \rangle_{\mathfrak{g}} ,  \qquad b_i = \langle \Delta_\mathfrak{g} \phi , e_i \rangle_{\mathfrak{g}}, \qquad i \in \mathbb{Z}_+^\ast.
\ee

Note that 
\be
b_i = \langle \Delta_\mathfrak{g} \phi, e_i \rangle_{\mathfrak{g}}  =  \langle \phi, \Delta_\mathfrak{g} e_i \rangle_{\mathfrak{g}}  =  \lambda_i \langle \phi, e_i \rangle_{\mathfrak{g}}  = \lambda_i a_i. 
\ee
We get then that
\bea
\|L_N \phi - \Delta_\mathfrak{g}\phi \|_{L^2(\mathbb{S}^n)}  
&= \Big\| \Pi_N \Delta_\mathfrak{g} \Big(\sum_{i=1}^N a_i e_i \Big) - \Delta_\mathfrak{g} \phi \Big \|_{L^2(\mathbb{S}^n)} \\
&= \Big\|  \sum_{i=1}^N \lambda_i a_i e_i - \sum_{i=1}^\infty b_i e_i \Big \|_{L^2(\mathbb{S}^n)} 
=  \Big \|\sum_{i=N+1}^\infty b_i e_i \Big \|_{L^2(\mathbb{S}^n)}
\eea
and Assumption ${\bf (A2)}$ follows. 

\medskip
{\sc Step 3}: Checking {\bf (A6)}. Since $F$ and $\mathfrak{C}$ are globally Lipschitz, the existence of a unique mild solution to \eqref{heat_eq} and to its Galerkin approximation \eqref{ODE_Galerkin} follows directly from a classical fixed point argument and standard Gronwall's estimates; see e.g.~\cite[Prop.~4.3.3 and Thm.~4.3.4]{Cazenave_al98}. In particular, by \eqref{Eq_control_linearflow_heat}, we obtain the following a priori estimates  for all $t$ in $[0, T]$ and $N$ in $\mathbb{Z}_+^\ast$:
\bea \label{solution_bounds_heat_eq}
& \|y(t; x, u)\|_{L^2(\mathbb{S}^n)} \le e^{\Lip(F)t}\|x\|_{L^2(\mathbb{S}^n)} + \int_0^t g(s) \d s \\
& \hspace{16em}+ \Lip(F) \int_0^t g(s) e^{\Lip(F)(t-s)} \d s, \\
& \|y_N(t; \Pi_N x, u)\|_{L^2(\mathbb{S}^n)} \le e^{\Lip(F)t}\|x\|_{L^2(\mathbb{S}^n)} + \int_0^t g(s) \d s\\
& \hspace{16em} +  \Lip(F) \int_0^t g(s) e^{\Lip(F)(t-s)} \d s,  
\eea
where 
\bes
g(s) =  \|F(0)\|_{L^2(\mathbb{S}^n)} + \Lip(\mathfrak{C}) \|u(s)\|_{L^2(\mathbb{S}^n)}, \qquad \text{for a.e. } s \in [0, T]. 
\ees
Moreover, since by assumption $u(t)$ takes value in a compact thus bounded set $U$ for each $u \in \mathcal{U}_{ad}$, the a priori estimates \eqref{solution_bounds_heat_eq} also ensure the required uniform boundedness estimates \eqref{Eq_y_uniform-in-u_bounds} stated in Assumption ${\bf (A6)}$.

\medskip
{\sc Step 4}: Checking {\bf (A7)}. Due to our assumptions and from what precedes, the conditions of Lemma~\ref{Lem_examples} are satisfied and thus 
Assumption ${\bf (A7)}$ is satisfied. The proof is complete. 

\ep

\br\label{local_lipman}
Corollary \ref{Cor:heat_Galerkin_approx} has been formulated in the case where $F$ and $\mathfrak{C}$ are globally Lipschitz, but actually the conclusions of this corollary still hold if these conditions are relaxed to be locally Lipschitz  as long as Assumption {\bf (A6)} is satisfied with the relevant a priori estimates as a consequence of Theorem \ref{Lem:uniform_in_u_conv}.
\er

\br \label{Rmk_heat_F}

Similar to Remark~\ref{Rmk:time-dependent-F}, we note that Corollary~\ref{Cor:heat_Galerkin_approx} still holds when the nonlinearity $F$ depends also on time, i.e., $F: [0,T] \times L^2(\mathbb{S}^n) \rightarrow L^2(\mathbb{S}^n)$, and satisfies for instance that $F(\cdot,y) 
\in L^\infty(0,T; L^2(\mathbb{S}^n))$ for each $y\in L^2(\mathbb{S}^n)$, $F(t,\cdot)$ is globally Lipschitz for almost every $t \in [0, T]$ and the mapping $t\mapsto \Lip(F(t, \cdot))$ is in $L^\infty(0,T)$.

\er

\br  \label{Rmk_heat_general_metric}

Note also that Corollary~\ref{Cor:heat_Galerkin_approx} still holds when the semilinear heat problem \eqref{heat_eq} is posed on a general $n$-dimensional Riemannian smooth and compact manifold $(\mathfrak{M}, \mathbf{g})$ without boundary, with the Riemannian metric $\mathbf{g}$ not limited thus to $\mathfrak{g}$ defined in \eqref{Def_gij}. Similarly the case of semilinear heat problem  posed on a non-empty relatively compact subset $\Omega$ of $(\mathfrak{M}, \mathbf{g})$ with homogenous Dirichlet boundary conditions can also be dealt with.  This is because Theorems~\ref{Thm_deltag} and \ref{Thm_heat_semigroup} still hold for such cases; see again \cite[Thm.~4.9 and Thm.~10.13]{Grigoryan09}. In particular, Corollary~\ref{Cor:heat_Galerkin_approx} holds when the Riemannian metric $\mathfrak{g}$ on $\mathbb{S}^n$ is replaced by another Riemannian metric $\widetilde{\mathfrak{g}}$ and the Laplacian $\Delta_\mathfrak{g}$ in \eqref{heat_eq} is replaced by $\Delta_{\widetilde{\mathfrak{g}}}$ accordingly. This is remark about the change of Remannian metric is used in Sect.~\ref{Sec_EBM} that follows.

\er

\subsection{Energy balance models} \label{Sec_EBM}

Energy balance models (EBMs) are among the simplest climate models that can be used for the study of climate sensitivity.  They are formulated based on the energy balance on the Earth surface \cite{North_al81,stephens2012update} and have the Earth surface temperature as the only dependent variable. First made popular by the works \cite{Budyko69,Sellers69}, these models have been extensively studied since both analytically and numerically; see e.g.~\cite{bermejo2009mathematical,North_al81,Roques_al14} and references therein. 

With suitable tuning of their parameters, EBMs that resolve the Earth's land-sea geography and are forced by the seasonal insolation cycle have been shown to mimic, to a certain extent, the observed zonal temperatures for the observed present climate \cite{North_al83,crowley2000causes}. Once EBMs are fitted to observations \cite{Sellers69,Ghil76,graves1993new} or to simulations from general circulation models (GCMs)\cite{hyde1989comparison,crowley2000causes}, they can be used to estimate the temporal response patterns to various forcing scenarios; such a methodology is of particular interest in the detection and attribution of climate change \cite{stone2007detection}.

Depending on whether zonal or meridional averages are used, the modeled surface temperature can either depend on the latitude only, or depend on both  the latitude and the longitude, resulting respectively in  1D, or 2D models. In the 2D case, the model is posed on the two-dimensional unit sphere $\mathbb{S}^2$, and takes typically the following form \cite{North_al83}:
\be \label{Eq_EBM}
\frac{\partial T(\xi,t)}{\partial t} = \mathrm{div}_\mathfrak{g}( D(x) \nabla_{\mathfrak{g}} T(\xi, t) )+ f(t, x, T(\xi,t)) - g(T(\xi,t)) + E(\xi,t), 
\ee
for all $\xi$ in $\mathbb{S}^2$, and $t > 0.$

Here the gradient $\nabla_{\mathfrak{g}}$ and the divergence $\mathrm{div}_\mathfrak{g}$ on the Riemannian manifold $(\mathbb{S}^2, \mathfrak{g})$ are given respectively by \eqref{Eq_grad} and \eqref{Eq_div}, and the Riemannian metric $\mathfrak{g}$ is given by \eqref{Def_gij}.  The diffusion term $\mathrm{div}_\mathfrak{g} ( D(x) \nabla_{\mathfrak{g}} T(\xi, t) )$ describes the redistribution of heat on the surface of the Earth by conduction and convection, the reaction terms $f(t, x, T(\xi,t)) - g(T(\xi,t))$ express the balance between incoming and outgoing radiations, and $E(\xi,t)$ denotes an anthropogenic forcing. See Table~\ref{tab_EBM} for the precise meaning of the symbols involved in  \eqref{Eq_EBM}.  We 
refer to \cite{bermejo2009mathematical} for the rigorous approximation of \eqref{Eq_EBM} via finite elements on manifolds.

\begin{table}[h]
\caption{Glossary of model's parameters \& variables}
\label{tab_EBM}       
\centering
\begin{tabular}{ll}
\toprule\noalign{\smallskip}
Symbol & Interpretation \\ 
\noalign{\smallskip}\hline\noalign{\smallskip}

$\xi := (\rho, \theta)$ & $\rho \in (0, \pi)$ denotes the latitude and $\theta \in (-\pi, \pi)$, the longitude \\
$T(\xi,t)$ & sea surface temperature at $\xi,t$ \\
$x$ &  sine of the latitude, i.e. $x = \sin (\rho)$ \\
$D(x)$ &  diffusion coefficient for heat transport, zonally averaged \\
& and hence does not dependent on the longitude $\theta$ \\
$f(t, x, T(\xi,t))$ & incoming solar radiation \\
$g(T(\xi,t))$ &  outgoing infrared radiation \\
$E(\xi,t)$ &  forcing representing greenhouse gas emissions   \\
\noalign{\smallskip} \bottomrule 
\end{tabular}
\end{table}

The functions $f(t, x, T(\xi,t))$ and $g(T(\xi,t))$ are typically of the following form \cite{Budyko69}:
\bea \label{Eq_radiation}
& f(t, x, T(\xi,t)) = Q S(x,t) (1 - \alpha(x,T)), \\
& g(T(\xi,t)) = a + b T(\xi,t).
\eea
Here $Q$ is the so-called solar constant, $S(x,t)$ denotes a solar insolation distribution function, $\alpha(x,T)$ denotes the albedo, and $a$ and $b$ are empirical constants typically estimated from satellite observations; see e.g.~\cite{graves1993new}. We refer to \cite{alexeev2005polar} for the calibration of other parameters including $Q$, or coefficients such as $S(x,t)$ or contained in $\alpha(x,T)$; see also \cite{Roques_al14}. 

We also note that the LHS of \eqref{Eq_EBM} should be multiplied by a factor $\kappa(\xi)$, which measures the effective heat capacity per unit area. Here, we have assumed that $\kappa(\xi)$ is a constant which is taken to be $1$ after a scaling in the time variable.

As a preparation to cast a controlled version of \eqref{Eq_EBM} into the form of \eqref{heat_eq}, we will make use of a new Riemannian metric so that the diffusion term in \eqref{Eq_EBM} becomes simply the Laplician under this new metric. For this purpose, we assume that
\bi

\item[{\bf (H1)}] the diffusion coefficient $D(x)$ is $C^1$-smooth and is strictly positive. 

 \ei
 
 By introducing the new Riemannian metric 
\be
\widetilde{\mathfrak{g}} = \frac{1}{D(x)} \mathfrak{g}, 
\ee
it is known that the Laplace operator under this new Riemannian metric is given by 
\be
\Delta_{\widetilde{\mathfrak{g}}} = \mathrm{div}_\mathfrak{g}  ( D(x) \nabla_{\mathfrak{g}}).
\ee
Therefore, Eq.~\eqref{Eq_EBM} can be rewritten into the following form on $(\mathbb{S}^2, \widetilde{\mathfrak{g}})$:
\be \label{Eq_EBM_v2a}
\frac{\partial T(\xi,t)}{\partial t} = \Delta_{\widetilde{\mathfrak{g}}} T(\xi,t) + f(t, x, T(\xi,t)) - g(T(\xi,t)) + E(\xi,t).
\ee

With this rewriting, we consider the operator $\mathcal{L}_{\widetilde{\mathfrak{g}}}=-\Delta_{\widetilde{\mathfrak{g}}}$ with domain $\mathcal{D}(\mathcal{L}_{\widetilde{\mathfrak{g}}})$ given by \eqref{dom_L} in which $\widetilde{\mathfrak{g}}$ replaces $\mathfrak{g}$. 
We are now in position to apply the general results of Sect.~\ref{Sec_optctr_man} in particular Corollary \ref{Cor:heat_Galerkin_approx} to a contemporary problem related to geoengineering that we address here in the framework of optimal control of EBMs such as \eqref{Eq_EBM_v2a}. 

\subsection{Optimal control of climate?}
In 1955, John von Neumann envisioned  that ``probably intervention in atmospheric and climate matters will come in a few decades, and will unfold on a scale difficult to imagine at present;'' see \cite{Neumann55}. As our planet enters a period of changing climate never before experienced in recorded human history, primarily caused by the rapid buildup of carbon dioxide in the atmosphere from the burning of fossil fuels, interest is growing in the potential for deliberate large-scale intervention in the climate system to counter climate change; see e.g.~\cite{Shepherd09, NAP15a,NAP15b}. Although we are still far away from large-scale implementation of what John von Neumann envisioned decades ago, the consideration of climate engineering---also known as geoengineering---is raising in the scientific community with
a literature that became more abundant on the topic over the recent years; see e.g.~\cite{Lenton09,Shepherd09, Vaughan_al11,NAP15a,NAP15b}.

At the simplest level, the surface temperature of the Earth results from the net balance of incoming solar (shortwave) radiation
and outgoing terrestrial (longwave) radiation \cite{Kiehl97}. Proposed geoengineering methods attempt to rectify the current and potential future radiative imbalance and they are usually divided into two basic categories: (i) carbon dioxide removal techniques which remove CO$_2$ from the atmosphere to increase the  amount of longwave radiation emitted by the Earth; and (ii) solar radiation management techniques that reduce the amount of solar (shortwave) radiation absorbed by the Earth by reflecting a small percentage of the sun's light and heat back into space. 

While a lot of efforts have been devoted to describing different geoengineering options in detail and discussing their advantages, effectiveness, potential side effects and drawbacks, still more understanding is required before any method could even be seriously considered for deployment on the requisite international scale \cite{Shepherd09}.  On the other hand, policies to reduce global greenhouse gas (GHG) emissions is a pressing topic on any political agenda, and uncertainties to climate change \cite{murphy2004quantification,meinshausen2009greenhouse} add to the difficulty in quantifying unambiguously the effects of forcing variations on the climate system.

From a mathematical perspective, since any geoengineering methods or GHG mitigation policies can be expressed as controls acting on the climate system, it is natural and important to investigate whether a given type of controls, corresponding e.g.~to one or a combination of several geoengineering methods, can drive the climate system from a given ``current'' state to a desired state over a targeted finite time horizon. This controllability aspect has indeed been investigated within the context of climatology based on some types of EBMs; see e.g.~\cite{Diaz94b, Diaz94}. 

Given that any large-scale decision for addressing climate change have economic \cite{hallegatte2009strategies}, societal or physical constraints, it also seems natural to frame the problem as an optimal control of the climate system to seek for controls within a chosen set of geoengineering strategies that lead to the minimization of a relevant cost functional.  To our knowledge, this optimal control perspective has not yet been investigated much from a fundamental viewpoint. In the following, we aim to provide a sufficiently general formulation for this purpose, based on the class of EBMs encompassed by Eq.~\eqref{Eq_EBM}. The latter equation will serve as our underlying state equation in what follows. 

Since EBMs are known to provide reliable models of the mean annual global temperature distribution around the globe \cite{hyde1989comparison,crowley2000causes,stone2007detection}, 
they constitute a natural laboratory for such an investigation before one moves onto more sophisticated and detailed climate models such as GCMs \cite{GCS08}. In that respect, it is also worth mentioning that EBMs can actually be derived from the thermodynamics equation of the atmosphere primitive equations via an averaging procedure \cite{Kiehl92}.  See also \cite{Brock13} for the design of optimal economic mitigation policies based on EBMs coupled with an economic growth model.

\subsection{Optimal control of EBMs: Convergence results of value functions}\label{Sec_OC_EBM}
We formulate the optimal control of EBMs within the general setting of Sect.~\ref{Sect_Galerkin}, by relying on the properties of the heat semigroup on the sphere recalled in Sect.~\ref{Sec_optctr_man}, here generated by $-\mathcal{L}_{\widetilde{\mathfrak{g}}}$. 

First, in order to allow for geoengineering strategies of different nature in different geographic regions, we consider a collection of open subsets 
\bes
\{\Omega_i \subset \mathbb{S}^2 \; : \; i = 1,\cdots, M\},
\ees
 where $M$ denotes the number of such strategies, one for each region $\Omega_i$, with possible overlapping.

The set of admissible controls is defined as follows.  For region $\Omega_i$ we define the Hilbert space of functions
\be
V_i=L^2(\Omega_i, \mathbb{R}^{n_i}), 
\ee
with $n_i$ some positive integer, and introduce 
\be
V := V_1 \times  \cdots \times V_M.
\ee
Consider for each $i$, $U_i$ to be a compact subset of $V_i$ and let us introduce the set  
\be
U := \{v = (v_1, \cdots, v_M) \in V \; :\;  v_i \in U_i, \; i = 1, \cdots, M\}.  
\ee

Let the set of admissible controls be given by: 
\be\label{Eq_U_bounded2}
\mathcal{U}_{ad}:=\{u\in L^q(0,t_f; V)\;: u(s) \in U \textrm{ a.e. }\}.
\ee
where $q >  1$ is fixed.  
Note that since each $V_i$ is a space of functions defined over a region $\Omega_i$, an admissible control $u$ in $\mathcal{U}_{ad}$ is actually a locally distributed control.  Note that given $u$ in $\mathcal{U}_{ad}$ and $t$ in $(0,t_f)$, its $i^{{\rm th}}$-component $u_i(t)$ is a spatial function that lies in $V_i$. We will denote hereafter by $u_i(t)[\xi]$ its value taken at $\xi \in \mathbb{S}^2$.

Finally, we assume that the combined effects of the geoengineering strategies on the global temperature field $T(\xi,t)$, is represented via a nonlinear function 
\bea
G: \; &\mathbb{R}^{n_1} \times \cdots \times\mathbb{R}^{n_M} \rightarrow \mathbb{R}\\
    &(\zeta_1,\cdots,\zeta_M)\longmapsto G(\zeta_1,\cdots,\zeta_M),
\eea
that forces Eq.~\eqref{Eq_EBM_v2a}. In practice, the modeler has to specify the function $G$ (and $V$), depending on the geoengineering strategy or the GHG mitigation policy adopted as well as the EBM retained. As explained below, our framework ensures that a global Lipschitz  assumption\footnote{That can be relaxed to a local Lipschitz  assumption as long as a priori error estimates are available to ensure Assumption {\bf (A6)}; see Remark \ref{local_lipman}.} on $G$ allows for convergence of Galerkin approximations and thus provide a rigorous basis for a numerical investigation of various control scenarios.

We consider thus, for each $u$ in  $\mathcal{U}_{ad}$, the following controlled version of the EBM \eqref{Eq_EBM_v2a} which writes for each $\xi \in \mathbb{S}^2$ and $t \in [0, t_f]$ as,
\bea \label{Eq_EBM_v2}
\frac{\partial T(\xi,t)}{\partial t} &= \Delta_{\widetilde{\mathfrak{g}}} T(\xi,t) + f(t, x, T(\xi,t)) - g(T(\xi,t)) + E(\xi,t)\\
&\hspace{16em} + G(\widetilde{u}_1(t,\xi),\cdots, \widetilde{u}_M(t,\xi)), 
\eea
supplemented with an initial condition $T_0$ in $L^2(\mathbb{S}^2).$  Here
\be
\widetilde{u}_i(t,\xi)=
\begin{cases}
u_i(t)[\xi], \; \mbox{ if } \xi  \in \Omega_i,\\
0, \; \; \mbox{ otherwise}.
\end{cases}
\ee

In order to recast the IVP associated with \eqref{Eq_EBM_v2} into the abstract form \eqref{heat_eq}, we introduce the following function spaces
\be
\cH:= L^2(\mathbb{S}^2), \qquad  \cH_1 := \mathcal{D}(\mathcal{L}_{\widetilde{\mathfrak{g}}}),
\ee
where $\mathcal{D}(\mathcal{L}_{\widetilde{\mathfrak{g}}})$ is defined in \eqref{dom_L}.

We make the following assumption. 
\bi

\item[{\bf (H2)}]  The functions $f$ and $g$ take the forms given by \eqref{Eq_radiation}, where the solar insolation distribution function $S$ therein lives in the space $L^\infty((0, \infty), L^\infty(\mathbb{S}^2))$ and the albedo $\alpha(x,T)$ is a continuous, piecewise-linear ramp function such as given in \cite[Eq.~(2a)]{Ghil76} or \cite[Eq.~(2.2)]{Roques_al14}.  The GHG emission term $E$ belongs to $L^1_{\mathrm{loc}}((0, \infty), L^2(\mathbb{S}^2))$. 

\ei
We define now the nonlinearity $F:  [0,t_f] \times \cH  \rightarrow \cH$ to be: 
\be \label{Eq_EBM_F}
F(t, v)[\xi] :=  f(t, x, v(\xi)) - g(v(\xi)) + E(\xi,t), 
\ee
for all   $v$ in $\cH$, and a.e.~$\xi$  in $\mathbb{S}^2$, $t \in [0, t_f]$.

Under Assumption {\bf (H2)}, for each $\v$ in $\cH$ and almost every $t\in [0, t_f]$, $F(t,v)$ belongs to $\cH$. 

Finally, we define the nonlinear operator $\mathfrak{C}: V \rightarrow \cH$ associated with the control to be: 
\be \label{Eq_EBM_C}
\mathfrak{C} (v)[\xi] :=  G(\widetilde{v}_1(\xi),\cdots,\widetilde{v}_M(\xi)),  \qquad  \text{ for all } v \in V, \text{ and a.e.} \; \xi \in \mathbb{S}^2,
\ee
with
\be
\widetilde{v}_i(\xi)=
\begin{cases}
v_i(\xi), \; \mbox{ if } \xi  \in \Omega_i,\\
0, \; \; \mbox{ otherwise}.
\end{cases}
\ee

Then, Eq.~\eqref{Eq_EBM_v2} can be rewritten as Eq.~\eqref{heat_eq} of Sect.~\ref{Sec_optctr_man}, with the (nonlinear) operators $F$ and $\mathfrak{C}$ defined above. Having the purpose in mind of driving the temperature field $T(\xi,t)$ to a state sufficiently close to a specified profile at the final time $t_f$, while keeping the control cost ``low'', we consider the cost functional: 
\be \label{Eq_EBM_J}
J(T_0, u) = \int_{0}^{t_f} \hspace{-.5ex}\left ( \frac{1}{2} \|T(\cdot, t; T_0, u) - T_d\|_{\cH}^2 + \frac{\mu}{2} \|u(\cdot, t)\|_{V}^2 \right) \d t,   \;\; T_0 \in \cH, \; \mu \geq 0.
\ee
Here $T_d$  denotes the targeted temperature field over the globe (that lies in $\cH$) and $T(\cdot, t; T_0, u)$ denotes the mild solution to \eqref{Eq_EBM_v2} that emanates from $T_0$.

The associated optimal control problem reads then: 
\bea  \label{P_EBM} 
\begin{aligned}
\min \, J(T_0,u)  \quad \text{ s.t. }&  \quad (T, u) \in L^2(0,T; \cH) \times  \mathcal{U}_{ad} \text{ solves  Eq.} ~\eqref{Eq_EBM_v2}  \\& \text{ subject to the initial condition}  \; T(\cdot, 0)  = T_0 \in \cH.
\end{aligned}
\eea

\medskip
\noindent{\bf Approximation of the value function and error estimates about the optimal control.}  Note that thanks to Assumption {\bf (H2)}, the nonlinearity $F$ defined in \eqref{Eq_EBM_F} satisfies the conditions required in Remark~\ref{Rmk_heat_F}. If we assume furthermore that the nonlinear operator $\mathfrak{C}$ defined by \eqref{Eq_EBM_C} to be globally Lipschitz as a mapping from $V$ to $\cH$ (and $\mathfrak{C}(0) = 0$) then by Corollary \ref{Cor:heat_Galerkin_approx} and Remarks~\ref{Rmk_heat_F}--\ref{Rmk_heat_general_metric}, the uniform convergence result given by \eqref{heat_eqn_uniform_in_u_conv} in Corollary~\ref{Cor:heat_Galerkin_approx} holds for the IVP associated with \eqref{Eq_EBM_v2}. Namely,
\be\label{Cve_y}
\lim_{N\rightarrow \infty}  \sup_{u\in \mathcal{U}_{ad}} \sup_{t \in [0, T]} \|y_N(t; \Pi_N x, u) - y(t; x,u)\|_{\cH} = 0, 
\ee
where $y_N(\cdot; \Pi_N x, u)$ denotes the solution to the Galerkin approximation of ~\eqref{Eq_EBM_v2} associated with the eigen-subspace spanned by the first $N$ eigenfunctions of the Laplacian $\Delta_{\widetilde{\mathfrak{g}}}$.

Note also that Condition \eqref{C1} on the cost functional $J$ defined in \eqref{Eq_EBM_J} is clearly satisfied. Then, Theorem~\ref{Thm_cve_Galerkin_val} applies if we assume furthermore that there exists for each pair $(t,x)$ a  minimizer $u_{t,x}^*$ (resp.~$u_{t,x}^{N,*}$) in $\mathcal{U}_{ad}[t,T]$ of the minimization problem in \eqref{subEq1} (resp.~in \eqref{subEq2}) associated here with the optimal control problem\eqref{P_EBM}. Thus, for any $x$ in $\cH$, it holds that
\be \label{Cve_v}
\lim_{N \rightarrow \infty} \sup_{t \in [0, T]} |v_N(t,\Pi_N x) - v(t,x)| = 0.
\ee
Observe that since  $\mathfrak{C}$ is globally Lipschitz, Assumption {\bf (H2)} leads to simple a priori error estimates such as \eqref{solution_bounds_heat_eq}, showing thus that Assumption {\bf (A6)} is satisfied; see also Remark \ref{Rmk_heat_F}.  The other conditions in Assumption {\bf (E)} of Sect.~\ref{Sec_Err_estimates} are trivially satisfied here and thus the error estimates of Sect.~\ref{Sec_Err_estimates} hold. In particular Corollary \ref{Lem_controller_est} 
applies with $q=2$ and $\sigma=\mu/2$ which, using the notations of this corollary, leads to
\bea\label{Err_estu}
\|u^\ast - u^\ast_{N}\|_{L^2(0,T; V)}^2 & \le \frac{4\mathcal{C}+4\|T_d\|_{\mathcal{H}}}{\mu} \left[\sqrt{T} + \gamma T  \right] \Bigl( \| \Pi_N^\perp y(\cdot;u^*)\|_{L^2(0,T; \cH)}  \\
&  \hspace{11em} + 2 \|  \Pi_N^\perp y (\cdot; u^{*}_{N})\|_{L^2(0,T; \cH)} \Bigr),
\eea 
after a simple estimate  of $\Lip(\mathcal{G}\vert_{\mathfrak{B}})$ where $\mathcal{G}(y)=\|y-T_d\|_{\mathcal{H}}^2$, $y \in \cH$.

With the rigorous convergence results \eqref{Cve_y} and \eqref{Cve_v}  and error estimate  \eqref{Err_estu}, the numerical approximation of solutions to \eqref{P_EBM} becomes affordable via e.g.~a Pontryagin-Maximum-Principle approach applied to Galerkin approximations \cite{CL15} of Eq.~\eqref{Eq_EBM} built here from spherical harmonics.
Indeed, a relatively realistic EBM such as given by Eq.~\eqref{Eq_EBM} is known to be simulated accurately out of few spherical harmonics (typically $20 \leq N \leq 30$); see e.g.~\cite{North_al83,hyde1989comparison}.

\section{Concluding remarks} \label{Sect_concluding_remarks}
Thus, by means of rigorous Galerkin approximations of nonlinear evolution equations in Hilbert spaces, this article provides a natural framework for the synthesis of approximate optimal controls, along with approximations of the value functions.
 The framework opens up several possible directions for future research. We outline some of these issues below. 
 
1. The usage of spectral methods for solving more complex climate models than EBMs considered in Sect.~\ref{Sec_appl} is standard. 
By its natural assumptions to verify in practice, the framework presented above makes possible to address  the problem of geoengineering strategies or GHG mitigation policies in terms of optimal control of Galerkin approximations of such models, enabling thus, at least theoretically, to reduce the dimension of the problem. 
However, very often these models include e.g.~nonlinear advection terms that require to deal with a loss of regularity. Our framework needs thus to be amended to deal with such a situation.  The use of interpolated spaces to deal with the loss of regularity and formulations of the Trotter-Kato theorem exploiting Gelfand triple are natural tools to cope with this difficulty. Analogues of Theorems \ref{Lem:uniform_in_u_conv} and  \ref{Thm_cve_Galerkin_val}, as well as Corollary \ref{Lem_controller_est}, seem thus to be reasonably accessible within this approach.  This way, more realistic models, including e.g.~the coupling of an EBM with a deep ocean such as dealt with in \cite{diaz2014multiple}, could be considered. Similarly, the inclusion of more realistic dynamic boundary conditions for such systems could benefit from the approximate controllability study of \cite{bejenaru2001abstract}.

2. Another promising direction  is the synthesis of (approximate) optimal controls in a feedback form from the Hamilton-Jacobi-Bellman (HJB)  equation associated with Galerkin approximations \cite{KroenerKunischZidani2015,GarckeKroener2016}.  This is particularly relevant for the optimal control of systems near the first criticality in which only very few modes\footnote{Such as a pair of modes in the case of a Hopf bifurcation \cite{ma2005bifurcation}.} have lost their stability and where Galerkin approximations are very often useful to approximate the dynamics near the bifurcated states \cite{dijkstra_al16} although center manifold reduction techniques lead often to further reducing the number of resolved modes needed to approximate accurately the dynamics \cite{ma2005bifurcation,PTD-MW,CLW15_vol1}.  
We refer to \cite{CKL17_DDE} for a first study along this direction, in the context of a Hopf bifurcation arising in a nonlinear delay differential equation (DDE).

3. However, far from the first criticality or when the nonlinear effects get amplified  a larger number of modes is required to dispose of good Galerkin approximations of, already,  the uncontrolled dynamics; see \cite{CL15,CLW15_vol2}.  The numerical burden of the synthesis of controls at a nearly optimal cost---by solving the HJB equation corresponding to the Galerkin approximation---becomes then quickly prohibitive, especially for the case of locally distributed controls; see \cite[Sect.~7]{CL15}.  One avenue to deal with reduced state space of further reduced dimension, is to search for high-mode parameterizations that help reduce the residual energy contained in the unresolved modes, i.e.~to reduce the RHS of \eqref{Est_contr_diff} involving the terms $\| \Pi_N^\perp y(\cdot;u^*)\|_{L^2(0,T; \cH)}$ and $\|  \Pi_N^\perp y (\cdot; u^{*}_{N})\|_{L^2(0,T; \cH)}$ in Corollary \ref{Lem_controller_est}. The theory of parameterizing manifolds (PM) \cite{CL15,CL16post,CLMcW2016} allows for such a reduction leading typically to approximate controls  coming with error estimates that introduce multiplying factors $0\leq Q <1$
in the ``RHS-like'' of \eqref{Est_contr_diff}; see \cite[Theorem 1 \& Corollary 2]{CL15}. The combination of the Galerkin framework introduced here with the PM reduction techniques of \cite{CL15} constitutes thus an idea that is worth pursuing.

4. Finally,  we emphasize that the framework introduced here is not limited to Galerkin approximations built from eigenfunctions of the linear part. This is particularly useful for evolution equations for which such eigenfunctions are not the 
best choice to build Galerkin approximations. As explained in \cite{CGLW15}, systems of nonlinear DDEs constitute such a type of evolution systems. The optimal control of systems of nonlinear DDEs may thus benefit from the framework introduced here and will be pursued elsewhere. The work \cite{CKL17_DDE} illustrates promising results exploiting this idea.
Such an approach may be also relevant to study the possible impact of geoengineering strategies or GHG mitigation policies on large-scale climatic phenomena such as the El Ni\~no--Southern Oscillation (ENSO) and for which DDEs are known to provide good models able to capture some of the essential features of ENSO's irregularity;  see \cite{Tzip_al94,neelin1998enso,Chek_al14_RP,CGN17}.

\section*{Acknowledgments} 
The authors are grateful to the referees for their comments and for having pointed out relevant references for Sect.~\ref{Sec_appl}.
This work has been partially supported by the Office of Naval Research (ONR) Multidisciplinary University Research Initiative (MURI) grant N00014-12-1-0911 and N00014-16-1-2073 (MDC), and by the National Science Foundation grants DMS-1616981 (MDC) and DMS-1616450 (HL). MDC and AK were supported by the project ``Optimal control of partial differential equations using parameterizing manifolds, model reduction, and dynamic programming'' funded by the {\it Foundation Hadamard/Gaspard Monge Program for Optimization and Operations Research (PGMO).}


\appendix

\section{Existence of optimal controls}\label{Sec_existence_opt_contr}
We recall hereafter some standard sufficient conditions for the existence of optimal controls. Our approach follows 
\cite{HPUU09} that we adapt to our framework. 
Let $\mathcal U:=L^q(0,T;U)$ with $q\geq 1$, with here $U$ denoting a bounded, closed and convex subset of a separable Hilbert space $V$.
In particular,  $\mathcal U$ is also bounded, closed and convex.

Let $Y$ denotes a separable Hilbert space. Let us assume that we can write the state equation in the form
\begin{align}\label{state-equation}
 \mathcal{M}(y,u)=0,
\end{align}
with $ \mathcal{M}\colon Y \times  \mathcal{U} \rightarrow Z$ a continuous mapping taking values in a Hilbert space $Z$. In practice, the mapping $\mathcal{M}$ characterizes the model equation and involves, typically, differential operators.  

The optimal control problem reads as
\begin{align}\label{problem}
 \min_{(y,u) \in Y \times \mathcal U} J(y,u),\quad \text{subject to } \;\;  \mathcal{M}(y,u) =0.
\end{align}

Existence to this optimal control problem can be ensured under conditions grouped into the following assumption. 

\vspace{3ex}
\noindent{\bf Assumption A.1}
 \begin{itemize}
  \item[(i)] The state equation \eqref{state-equation} has a bounded solution operator 
 \bea
 &\mathcal{U} \rightarrow Y, \\
 & u\mapsto y(u).  
 \eea
  \item[(ii)] The mapping
  \bea
\mathcal{M}: &  \; Y \times \mathcal{U} \rightarrow Z,\\ 
&(y,u) \longmapsto \mathcal{M}(y,u),
\eea
 is continuous for the weak topology.
  \item[(iii)] The cost function $J\colon Y\times  \mathcal{U} \rightarrow \R^+$ is weakly lower semicontinuous.
 \end{itemize}
\vspace{1ex}

\begin{rem}
 Assumption {\bf A.1}-(iii) is satisfied if $J$ is convex and continuous.
\end{rem}

We introduce the feasible set
\begin{align}
F_{\text{ad}}:=\{ (y,u) \in Y \times \mathcal U  \; : \; \mathcal{M}(y,u)=0 \}.
\end{align}

\begin{defi}
 The pair $(\bar y,\bar u) \in Y \times \mathcal U$ is called a solution of \eqref{problem} if
 \begin{align}
 J(\bar u , \bar y) \le J(y,u) \quad \text{for all } (y,u) \in F_{\text{ad}}.
 \end{align}
\end{defi}
\begin{thm}
 Under Assumption {\bf A.1} problem \eqref{problem} has a non-empty set of solutions.
\end{thm}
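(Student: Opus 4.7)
The plan is to use the direct method of the calculus of variations. Since $J$ takes values in $\mathbb{R}^+$, the infimum $\alpha := \inf_{(y,u)\in F_{\text{ad}}} J(y,u)$ is finite, and $F_{\text{ad}}$ is non-empty: by Assumption {\bf A.1}-(i) the solution operator $u\mapsto y(u)$ is well-defined on $\mathcal{U}$, so for any $u\in\mathcal{U}$ the pair $(y(u),u)$ belongs to $F_{\text{ad}}$. Pick then a minimizing sequence $(y_n,u_n)_{n\geq 1}\subset F_{\text{ad}}$ such that $J(y_n,u_n)\to \alpha$ as $n\to\infty$.

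The next step is to extract weakly convergent subsequences. Because $U\subset V$ is bounded, closed and convex, the set $\mathcal{U}=L^q(0,T;U)$ (with $q\geq 1$; for $q=1$ one appeals to the reflexivity only when needed, but in the present context one can simply assume $q>1$ as elsewhere in the paper, in which case $L^q(0,T;V)$ is reflexive) is bounded, closed and convex, hence weakly sequentially compact by Banach--Alaoglu together with the Eberlein--\v{S}mulian theorem. Thus there exists $\bar u\in\mathcal{U}$ and a subsequence (not relabeled) with $u_n\rightharpoonup \bar u$ in $\mathcal{U}$. By Assumption {\bf A.1}-(i), the sequence $y_n=y(u_n)$ is bounded in the Hilbert space $Y$, so, passing again to a subsequence, $y_n\rightharpoonup \bar y$ in $Y$ for some $\bar y\in Y$.

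The third step is to show admissibility of the weak limit. By Assumption {\bf A.1}-(ii), $\mathcal{M}$ is weakly continuous from $Y\times\mathcal{U}$ to $Z$, so
\begin{equation*}
0 \;=\; \mathcal{M}(y_n,u_n) \;\rightharpoonup\; \mathcal{M}(\bar y,\bar u) \quad \text{in } Z,
\end{equation*}
which forces $\mathcal{M}(\bar y,\bar u)=0$; that is, $(\bar y,\bar u)\in F_{\text{ad}}$. Finally, by Assumption {\bf A.1}-(iii), $J$ is weakly lower semicontinuous, so
\begin{equation*}
\alpha \;\leq\; J(\bar y,\bar u) \;\leq\; \liminf_{n\to\infty} J(y_n,u_n) \;=\; \alpha,
\end{equation*}
which proves that $(\bar y,\bar u)$ is a minimizer and the set of solutions is non-empty.

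The only subtlety, and the step I would flag as the main obstacle if any of the assumptions were to be weakened, is the weak-to-weak continuity of $\mathcal{M}$ demanded in Assumption {\bf A.1}-(ii): in typical PDE applications this is the delicate point, since nonlinear terms in $\mathcal{M}$ are usually only continuous for the strong topology and one has to invoke compactness of the solution map (e.g.\ Aubin--Lions for parabolic problems) to upgrade the weak convergence of $y_n$ to a strong one before passing to the limit in the nonlinearity. Here, however, this difficulty is entirely absorbed into the hypothesis, so the abstract proof reduces to the bookkeeping above.
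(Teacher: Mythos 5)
Your proof is correct and follows essentially the same route as the paper: a minimizing sequence, weak sequential compactness from boundedness and reflexivity, weak closedness of $F_{\text{ad}}$ via Assumption {\bf A.1}-(ii) and the closedness/convexity of $\mathcal{U}$, and weak lower semicontinuity of $J$ via {\bf A.1}-(iii). You add a few clarifications absent from the paper's sketch---why $F_{\text{ad}}\neq\varnothing$, the need for $q>1$ (the paper loosely cites reflexivity of $L^2(0,T;\cH)\times Y$), and a remark on where weak continuity of $\mathcal{M}$ is the delicate hypothesis in applications---but the argument itself is the same.
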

\begin{proof}
 The proof is classical. We present only a brief sketch; e.g.~\cite[Sect.~1.5.2]{HPUU09}. 
 Since $J\ge 0$ and $F_{\text{ad}}$ is nonempty, the infimum
 exists  and there is a minimizing sequence $(y_k,u_k) \subset F_{\text{ad}}$ with $
  J^*:=\lim_{k\rightarrow \infty} J(y_k,u_k)$. 
 Furthermore, $(u_k)$ is bounded as a sequence in $\mathcal{U}$, and by Assumption {\bf A.1}-(i) the sequence $(y_k)$ is bounded. Thus, by reflexivity of $L^2(0,T;\cH)\times Y$ we can select a subsequence that converges weakly to some limit $(\bar u,\bar y)$.
 Since $\mathcal U$ is convex and closed, we have that $\bar u$  lies in $\mathcal U$, and together with Assumption {\bf A.1}-(ii) we have that the set $F_{\text{ad}}$ is sequentially weakly closed; hence, we have $(\bar y, \bar u) \in F_{\text{ad}}$. With Assumption {\bf A.1}-(iii) we obtain the assertion by a classical argument.
\end{proof}

\end{document}